\documentclass[12pt]{amsart}

\usepackage[utf8]{inputenc}
\usepackage[T1]{fontenc}
\usepackage[margin=1.25in]{geometry}
\usepackage[english]{babel}
\usepackage{amsmath,amsthm,amssymb,amsfonts}
\usepackage[justification=centering]{caption}
\usepackage{mathtools}
\usepackage{cases}
\usepackage{tikz}
\usetikzlibrary{patterns.meta}
\usepackage{hyperref}
\usepackage{enumitem}
\usepackage{pgfplots}
\usepackage{subcaption}
\usepackage{ifthen}
\tikzset{
    axis break gap/.initial=0mm
}
\pgfplotsset{compat=1.18}

\usepackage[backend=bibtex, sorting=nyt, giveninits=true]{biblatex}

\bibliography{bibli.bib,SN}

\makeatletter

\definecolor{darkgreen}{RGB}{0,150,0}
\definecolor{lightred}{rgb}{1,0.87,0.87}
\definecolor{lightgreen}{rgb}{0.8,1,0.8}
\hypersetup{  bookmarksnumbered, linkcolor=blue, citecolor=red,
    breaklinks,
  linkbordercolor=lightred,
  citebordercolor=lightgreen,
}

\IfFileExists{graphs.tex}{

  \newcommand{\data}[1]{data-##1}
  \newcommand{\dataSub}[2]{data-##1-##2}
}{
  \newcommand{\data}[1]{data/##1}
  \newcommand{\dataSub}[2]{data/##1/##2}
}

\@ifundefined{varnothing}{}{\let\emptyset=\varnothing}
\@ifundefined{leqslant}{}{\let\le=\leqslant \let\leq=\le}
\@ifundefined{geqslant}{}{\let\ge=\geqslant \let\geq=\ge}

\newcommand{\IR}{{\mathbb R}}
\newcommand{\IN}{{\mathbb N}}
\newcommand{\normal}{{\boldsymbol{n}}}
\newcommand{\phie}{\varphi_{\mathrm{e}}}

\newcommand{\intd}{\,\mathrm{d}}

\newcommand{\longtilde}[1]{{\stackrel{\sim}{\smash{#1}\rule{0pt}{1.1ex}}}}
\newcommand{\abs}[1]{\mathopen| #1\mathclose|}
\newcommand{\intervalcc}[1]{\mathopen[#1\mathclose]}
\newcommand{\intervalco}[1]{\mathopen[#1\mathclose[}
\newcommand{\intervaloc}[1]{\mathopen]#1\mathclose]}
\newcommand{\intervaloo}[1]{\mathopen]#1\mathclose[}

\newcommand{\Bigintervaloc}[1]{\Bigl]#1\Bigr]}

\DeclareMathOperator{\Div}{div}
\DeclareMathOperator{\Dom}{Dom}

\newcommand{\gammac}{\Gamma_{\! \mathrm{c}}}
\newcommand{\gammad}{\Gamma_{\! \mathrm{d}}}
\newcommand{\gammadi}{\Gamma_{\! \mathrm{d},i}}

\newcommand{\rhoc}{\rho_{\mathrm{c}}}
\newcommand{\rhod}{\rho_{\mathrm{d}}}
\newcommand{\rhoch}{\hat{\rho}_{\text{c}}}
\newcommand{\rhot}{\Tilde{\rho_1}}
\newcommand{\rhotn}{\Tilde{\rho}_{1,n}}

\newcommand{\phil}{\varphi_{\raisebox{-1pt}{$\scriptstyle\ell$}}}
\newcommand{\rhol}{\rho_{\raisebox{-1pt}{$\scriptstyle\ell$}}}

\numberwithin{equation}{section}

\newtheorem{theorem}{Theorem}[section]
\newtheorem{lemma}[theorem]{Lemma}

\newtheorem*{algorithm}{Algorithm}

\theoremstyle{remark}
\newtheorem{remark}[theorem]{Remark}

\makeatother

\title[Ion Flow Field problem applied to HVDC transmission lines]{  Multiple solutions and simulations for an Ion Flow Field problem
  applied to HVDC transmission lines}
\date{\today}

\author[M.~Chauvier]{Madeline Chauvier}\address[Madeline Chauvier]{Universit\'e Polytechnique Hauts-de-France, C\'ERAMATHS/DMATHS and FR CNRS 2037,F-59313 - Valenciennes Cedex 9 France and
  D\'epartement de Math\'ematique,
  Universit\'e de Mons,
  place du parc~20,
  B-7000 Mons, Belgium}
\email{Madeline.CHAUVIER@umons.ac.be}
\author[S.~Nicaise]{Serge Nicaise}
\address[Serge Nicaise]{Universit\'e Polytechnique Hauts-de-France, C\'ERAMATHS/DMATHS and FR CNRS 2037,
F-59313 - Valenciennes Cedex 9 France}
\email{Serge.Nicaise@uphf.fr}
\author[C.~Troestler]{Christophe Troestler}
\address[Christophe Troestler]{ D\'epartement de Math\'ematique,
  Universit\'e de Mons,
  place du parc~20,
  B-7000 Mons, Belgium}
\email{Christophe.TROESTLER@umons.ac.be}
\author[J.~Venel]{Juliette Venel}
\address[Juliette Venel]{Universit\'e Polytechnique Hauts-de-France, C\'ERAMATHS/DMATHS and FR CNRS 2037,
F-59313 - Valenciennes Cedex 9 France}
\email{Juliette.Venel@uphf.fr}

\begin{document}
\subjclass{35Q60,35G60,35M32,35A16}

\keywords{Ion flow field, HVDC lines, Nonlinear boundary value problems}
\maketitle

\begin{abstract}
  This paper initiates a mathematical investigation of a PDE model for
  the transport of high voltage direct current via
  overhead lines.  We prove the existence of infinitely many solutions,
  give necessary conditions for existence, explicitly compute the
  continuum of all radial solutions, and develop a new numerical
  algorithm for this problem.
\end{abstract}

\section{Introduction}

Nowadays, electricity is predominantly transmitted over high-voltage
(HV) lines using alternating current (AC) rather than direct current
(DC).  This is because AC power transmission makes it easy to change
the voltage magnitude from low (for electricity usage) to high (for
electricity transportation) voltages using transformers.  Transpor\-ting
electricity at a high voltage minimizes energy loss due to Joule
heating.
However, recent advances in power electronics as well as the
development of renewable energies has sparked interest in HVDC
transmission. In addition, HVDC has many advantages over HVAC, some of
which are highlighted hereafter (for a more thorough discussion, the
reader is referred to \cite{power_loss,interet1,interet2}).  Firstly,
the DC is more suitable than AC for long-distance transmission
overhead lines thanks to lower energy
losses.  Indeed, the longer the overhead lines, the more reactive
power is emitted. The reactive power, which is a parasitic effect
specific to AC systems, limits the capacity to transmit active
power---the real quantity of interest. In DC systems, this parasitic
effect no longer exists.
Consequently, the transmission with DC on overhead lines is also more
economical after the break-even distance (which is approximately
600--800 km), even if one takes into account the converter station for
DC.  Secondly, DC is also preferable for underground and submarine
lines---such as those bringing the energy from off-shore wind
turbines---due to the higher capacitance that affects the transmission
when using AC \cite{ABCs}.  The cost is lower after 40 km,
even when, again, the cost of the DC converter station is taken into account.
These
advantages are very interesting for transporting renewable
energy produced off-shore or at great distances from
cities.
Finally, and particularly relevant to this paper, the corona effect---whereby
some space charges are created---is
greatly reduced with DC~\cite{Wang}.  However, this last effect also
presents some challenges.  For overhead lines, the ions migrate away from
the cables due to the fixed polarity of DC,
which modifies the electric field in the air up to the ground.  For
public health reasons, it is important to quantify the magnitude of the
electric field on the ground to ensure that it is under an
acceptable level.

This subject has already been studied for a couple of years now by
several researchers
\cite{debut_finite,Sarma-Janischewskyj:1969,recent} but, as far as we
know, it has not been investigated from a mathematical point of view.

The simplest mathematical model used by the engineering community
\cite{AmorusoLattarulo:14,
  ButlerCendesHoburg:89,CristinaDinelliFeliziani:91,Lobry:14} is
described by the following three equations in the unipolar case (i.e.,
for a single conductor).
\begin{enumerate}
\item Poisson's equation:
  \begin{equation*}
    -\Delta \varphi = \frac{\rho}{\varepsilon_0},
  \end{equation*}
  where $\varphi$ represents the electric potential, $\rho$ denotes
  the space charges density and $\varepsilon_0$ is the
  permittivity of the air.
\item Ion current equation:
  \begin{equation*}
    J = \bigl(-\mu \nabla \varphi + W\bigr)\rho- D \nabla \rho,
  \end{equation*}
  where $J$ represents the ion current density, and $\mu$, $W$ and $D$ are
  constants representing respectively the ion mobility, the velocity
  of the wind, and the diffusion coefficient.
\item Current continuity equation:
  \begin{equation*}
    \Div J = 0.
  \end{equation*}
\end{enumerate}
These equations are considered on a domain $\Omega$, which corresponds
to the air surrounding the conductors above the ground. Ideally, it
would be unbounded, but for practical reasons it is reduced to a
bounded domain in a plane orthogonal to the transmission line; see
Figure~\ref{fig : half-disk} for an illustration.

\begin{figure}[h!t]
  \centering
  \begin{tikzpicture}[scale = 0.5,baseline={([yshift=-1ex]current bounding box.center)}]
    \draw[fill=black!10] (0:0) arc (0:180:5.5cm) -- (-11,0) -- (0,0);
    \draw[fill=white] (-5.5,2.5) circle (0.8);
    \draw (-5.5,1.1) node {$\gammac$};
    \draw (-1,0.57) node {$\gammad$};
    \draw (-8.5,2) node {$\Omega$};
    \fill[pattern={Lines[angle=45,distance=5pt]}] (-11,0) rectangle (0,-1);  \end{tikzpicture}
  \caption{A half-disc with a circular conductor.}
  \label{fig : half-disk}
\end{figure}

To maintain a certain
degree of generality, we assume that the domain $\Omega$ is a bounded
and connected set of $\IR^2$ such that
$\partial \Omega = \gammac \cup \gammad$ and
$\gammac \cap \gammad = \emptyset$, $\gammac$ representing the
boundary of the conductor, and $\gammad$ the boundary of the air
region and the contact with the ground (see Figure \ref{fig:omega} for an illustration).  We
here assume that $\gammac$ is $\mathcal{C}^{1,1}$, while $\gammad$ is
a curvilinear polygon of class $\mathcal{C}^{1,1}$
(see~\cite[Definition~1.4.5.1]{grisvard})
with no re-entrant corner in
the sense that $\gammad = \bigcup_{i=1}^I\gammadi$, $I \in \IN^{\ge 1}$, and
$\gammadi $ is $\mathcal{C}^{1,1}$ with the interior angle between two
consecutive $\gammadi$ is less than $\pi$. For simplicity, we
consider $W=D=0$.  In addition, without loss of
generality, we can normalize the following constants:
$\varepsilon_0 = \mu=1$.
With these choices, the previous equations become:
\begin{equation}\label{sys:gen}
  \begin{cases}
    -\Delta \varphi = \rho , &  \text{in } \Omega,\\[2\jot]
    \Div \bigl(\rho  \nabla\varphi \bigr) = 0, & \text{in } \Omega,
  \end{cases}
\end{equation}
where $\varphi$ and $\rho$ are the unknowns defined in the domain $\Omega$.

To solve this problem, we need to add some boundary
  conditions. It is standard practice  \cite{AmorusoLattarulo:14,ButlerCendesHoburg:89,CristinaDinelliFeliziani:91,Lobry:14}
to set the potential $\varphi$ to a constant $V$ on the
conductor and to $0$ on the ground.
Without loss of generality, we can set the potential to $1$ on
the conductor.
To represent the corona effect,
physicists commonly fix the outer normal derivative of the
potential on the conductor to a function $A: \gammac \to \IR$.
This is called the Kaptzov's
assumption \cite{debut_finite,onset_formula,diff2018}.  The boundary
conditions are thus given by:
\begin{equation} \label{sys:BC}
  \begin{cases}
    \varphi = 1, & \text{ on } \gammac,\\
    \varphi = 0, & \text{ on } \gammad, \\[2\jot]
    \displaystyle \frac{\partial \varphi}{\partial \normal} =  A,
                 & \text{ on } \gammac.
  \end{cases}
\end{equation}
Let us note that no boundary condition is imposed on $\rho$, one is
imposed on $\varphi$ on $\gammad$, and two are imposed on $\varphi$ on
$\gammac$.  Altogether, this yields three boundary conditions on
$\varphi$.  This can be explained by the fact that using the first
equation to eliminate $\rho$ in the second one of \eqref{sys:gen}
yields
\begin{math}
  \Div \bigl((\Delta \varphi)  \nabla\varphi \bigr) = 0,
  \text{ in } \Omega,
\end{math}
which is a nonlinear partial differential equation of order three in
$\varphi$.
\begin{figure}[h!t]
  \centering
  \begin{tikzpicture}[baseline = 1ex, scale = 0.9]
    \path
    \foreach \X/\Y in {1/0.9,2/0.2,3/0.45,4/0.3,5/0.9,6/0.7,7/0.5,8/0.1,9/0.45,10/0.15,11/0.7,12/0.8,13/0.4}
    {(360*\X/14:2+\Y) coordinate (A\X)};
    \draw[fill=black!10] plot[smooth cycle,samples at={1,...,13}] (A\x);
    \draw[fill=white] (0,0) circle (0.7);
    \draw[<->] (0,0) -- (0.7,0) node[right]{$r=r_0$};
    \node[left] at (-0.45,-0.5) {$\gammac$};
    \node at (-0.5, -2.5) {$\gammad$};
    \node at (-1.8,1.5) {$\Omega$};
    \draw[->, thick] (0.3,0.65) -- (0.1,0.2);
    \node[left] at (0.2,0.4) {$\normal$};
  \end{tikzpicture}
  \caption{A domain $\Omega$ around a circular conductor.}
  \label{fig:omega}
\end{figure}

\vspace*{2ex}

A trivial solution to Problem~\eqref{sys:gen}--\eqref{sys:BC} for a
suitably chosen function $A$ is the \emph{electrostatic solution},
which corresponds to the case of a vanishing charge density, i.e.\
$\rho \equiv 0$.  We denote the corresponding potential $\phie$, that
is the unique solution to
\begin{equation}\label{sys:elec}
  \begin{cases}
    -\Delta \phie = 0, &  \text{in } \Omega,\\
    \phie = 1, & \text{on } \gammac,\\
    \phie = 0,& \text{on } \gammad.
  \end{cases}
\end{equation}
The aim of this paper is to initiate a mathematical investigation of
Problem~\eqref{sys:gen}--\eqref{sys:BC} for $\rho \ge 0$ and
$\rho \not\equiv 0$.  In
section~\ref{sdiffusion}, we use a fixed point argument to prove the
existence of a solution with an additional small
diffusion term (which is present in some physical models
\cite{diff2015,diff2018,diff2013, diff2020}) and with the Neumann condition from \eqref{sys:BC}
replaced by a Dirichlet boundary condition on $\rho$ on the
whole boundary $\partial\Omega$.  Passing to the limit as the
diffusion coefficient
tends to zero, we show in section~\ref{s:unipolar} the existence of
infinitely many
solutions $(\varphi,\rho)$, with $\rho \not\equiv 0$, to
problem~\eqref{sys:gen} with the Dirichlet boudary conditions from
\eqref{sys:BC}.  However, the Neumann condition from \eqref{sys:BC} is again omitted and, by this limit procedure,
the Dirichlet
boundary condition on $\rho$ is lost. In section \ref{s:properties},
we first use the maximum principle to establish some comparison
between the normal derivatives of the solution to \eqref{sys:gen} with
the Dirichlet boundary conditions from \eqref{sys:BC} and of the
electrostatic solution.  We also give bounds on the normal
derivative of the electrostatic solution $\phie$ in the particular case where
$\gammac$ is circular. This enables us to deduce some necessary conditions
on $A$ for a solution to \eqref{sys:gen}--\eqref{sys:BC} to exist.
In section~\ref{s:radial}, we explicitly determine all radial 
solutions to
\eqref{sys:gen}--\eqref{sys:BC} when $\Omega$ is an annulus and $A$ is
radial (whence constant).  Some of these solutions 
were already mentioned without proof in the electrical engineering
community
\cite{Janischewskyj-Cela:1979,Sarma-Janischewskyj:1969,Takuma-Ikeda-Kawamoto:1981,Townsend:1914}
who used them as a benchmark for their algorithms.
We also do the same in section~\ref{snumerique} to provide evidence for the
convergence of a proposed new numerical algorithm.

\medskip

To conclude this introduction, let us introduce some notation used
throughout the paper.  The usual norm and semi-norm of
$H^{s}(\Omega)$, $s\ge 0$, are denoted by $\|\cdot\|_{s,\Omega}$ and
$|\cdot|_{s,\Omega}$, respectively. For $s=0$ we drop the index
$s$. The scalar product in $L^2(\Omega)$ is denoted by
$(\cdot \mid \cdot)_{\Omega}$. The same notation will be used for vector valued functions.
The Fréchet differential of a function $J$ of the
variable $\rho$ at a point $\rho_0$ in the direction $z$ is denoted by
$(\partial J/ \partial \rho) (\rho_0) [z]$.  Finally,
$X \hookrightarrow Y$ means that the Banach space $X$ is continuously
embedded into the Banach space $Y$.
For the curvilinear polygon $\gammad = \bigcup_{i=1}^I\gammadi$, we
denote $W^{2-1/p,p}(\gammad)$ with $p > 2$ the space of all continuous
functions $u$ defined on $\gammad$ such that
$u|_{\gammadi} \in W^{2-1/p,p}(\gammadi)$ for all $i$
(see \cite[Theorem~1.5.2.8]{grisvard} for the general compatibility
conditions).

\section{Existence of a solution for the unipolar case with diffusion}
\label{sdiffusion}

To prove an existence result for the problem \eqref{sys:gen} with
the Dirichlet boundary conditions from \eqref{sys:BC}, we first add a diffusion
term to regularize the problem.  In that case, inspired from
existence results for the drift diffusion model (see
\cite[\S~3.2]{Markowich:86} for instance), using a fixed point
argument, an existence result is available.

\begin{theorem}
  \label{thm : existence diff}
  Let
  $\rhoc \in H^{1/2}(\gammac)$, $\rhod \in H^{1/2}(\gammad)$,
  $\rhoc,\rhod \geq 0$ and define
  $K_+ \coloneq \max \lbrace \sup_{\gammac}\rhoc, \sup_{\gammad} \rhod
  \rbrace$ which is supposed to be strictly positive. Let us
  introduce
  \begin{equation*}
    W \coloneq \lbrace \rho  \in L^2(\Omega) \mid
    0 \le \rho \le K_+\rbrace.
  \end{equation*}
  Then for all $\varepsilon >0$, there exists a solution
  $(\varphi_\varepsilon, \rho_\varepsilon) \in H^{2}(\Omega) \times
  \bigl(W \cap H^1(\Omega)\bigr)$ to
  \begin{equation} \label{sys:sys_complet}
    \begin{cases}
      -\Delta \varphi_\varepsilon = \rho_\varepsilon, &  \text{ in } \Omega,  \\
      \Div \bigl( \varepsilon \nabla \rho_\varepsilon + \rho_\varepsilon \nabla\varphi_\varepsilon\bigr) = 0,
                              & \text{ in } \Omega,\\
      \varphi_\varepsilon = 1, &  \text{ on } \gammac,\\
      \varphi_\varepsilon = 0,&  \text{ on } \gammad, \\
      \rho_\varepsilon = \rhoc, &  \text{ on } \gammac, \\
      \rho_\varepsilon = \rhod, &  \text{ on } \gammad.
    \end{cases}
  \end{equation}
  Moreover, $\varphi_\varepsilon \in W^{2,q}(\Omega)$ for every $q \ge 2$.
  Finally, if $\rhoc \in W^{2-1/p,p}(\gammac)$
  and $\rhod \in W^{2-1/p,p}(\gammad)$ for some $p > 2$, then
  $\rho_\varepsilon \in W^{2,p}(\Omega)$.
\end{theorem}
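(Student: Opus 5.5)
We prove existence by a Schauder fixed point argument on the convex set $W$, viewed as a closed, bounded, convex subset of $L^2(\Omega)$. Given $\tilde\rho \in W$, we first solve the linear Dirichlet problem $-\Delta \varphi = \tilde\rho$ in $\Omega$, $\varphi = 1$ on $\gammac$, $\varphi = 0$ on $\gammad$. Since $\tilde\rho \in L^\infty(\Omega)$ and the boundary is $\mathcal{C}^{1,1}$ with only convex corners on $\gammad$ (interior angles less than $\pi$), the regularity theory of \cite{grisvard} for curvilinear polygons gives $\varphi \in W^{2,q}(\Omega)$ for every $q \ge 2$. It also gives the bound $\|\varphi\|_{W^{2,q}} \le C_q(1+K_+)$, uniformly in $\tilde\rho \in W$. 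In particular $\nabla\varphi \in W^{1,q} \hookrightarrow L^\infty$.

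Next we solve the linear problem $\Div(\varepsilon\nabla\rho + \rho\nabla\varphi) = 0$ in $\Omega$, with $\rho = \rhoc$ on $\gammac$ and $\rho = \rhod$ on $\gammad$, and set $T(\tilde\rho) = \rho$. Existence and uniqueness in $H^1(\Omega)$ are the main technical point, because the bilinear form $\varepsilon(\nabla\rho\mid\nabla v) + (\rho\nabla\varphi\mid\nabla v)$ is not coercive in general. We handle this with a Slotboom-type change of variables, as in the drift–diffusion literature \cite{Markowich:86}. Writing $\rho = e^{-\varphi/\varepsilon} u$, the flux becomes $\varepsilon e^{-\varphi/\varepsilon}\nabla u$, and the equation becomes $\Div(e^{-\varphi/\varepsilon}\nabla u) = 0$. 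This is a uniformly elliptic symmetric problem, since $\varphi$ is bounded by the maximum principle; indeed $0 \le \varphi \le \max\varphi$, with $\varphi$ bounded in $L^\infty$ uniformly. Its Dirichlet data $e^{\varphi/\varepsilon}\rho_{\mathrm{c}/\mathrm{d}} \in H^{1/2}$ are admissible because $\varphi$ is Lipschitz. Lax–Milgram then gives a unique $u$, hence a unique $\rho \in H^1(\Omega)$.

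We also need $\rho \in W$. Nonnegativity follows because $u$ satisfies a divergence-form equation with nonnegative data, so the weak maximum principle gives $u \ge 0$. For the upper bound we instead test the original equation with $(\rho - K_+)^+ \in H^1_0(\Omega)$. This gives
\[
\varepsilon\|\nabla(\rho-K_+)^+\|^2 = -\bigl(\rho\nabla\varphi \mid \nabla(\rho-K_+)^+\bigr).
\]
We split $\rho = (\rho-K_+)^+ + K_+$ on the support of $(\rho-K_+)^+$. The first piece is $\tfrac12\int \nabla\varphi\cdot\nabla(w^2)$ with $w = (\rho-K_+)^+$; integrating by parts, it equals $\tfrac12\int \tilde\rho\, w^2 \ge 0$. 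The second piece is $K_+\int\nabla\varphi\cdot\nabla w = K_+\int\tilde\rho\, w \ge 0$. Both use $-\Delta\varphi = \tilde\rho \ge 0$ and $w \in H^1_0$. The right-hand side is therefore $\le 0$, which forces $w = 0$. The sign of $\tilde\rho$ is essential here; this is the step I expect to need the most care.

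Continuity and compactness of $T$ come next. The $H^1$ bound on $\rho$ is uniform in $\tilde\rho \in W$, because $\|\nabla\varphi\|_{L^\infty}$ and $\|\varphi\|_{L^\infty}$ are uniformly bounded and the weights $e^{\pm\varphi/\varepsilon}$ are therefore uniformly controlled. By Rellich, $T(W)$ is relatively compact in $L^2$. For continuity, suppose $\tilde\rho_n \to \tilde\rho$ in $L^2$. Then $\varphi_n \to \varphi$ in $H^2$, hence in $W^{1,r}$ for all $r$. Together with the uniform $W^{2,q}$ bounds, this gives $\nabla\varphi_n \to \nabla\varphi$ uniformly. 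Uniqueness of the limit problem then gives $\rho_n \to \rho$ in $L^2$. Schauder's theorem yields a fixed point $\rho_\varepsilon = T(\rho_\varepsilon)$, and $(\varphi_\varepsilon,\rho_\varepsilon)$ solves \eqref{sys:sys_complet} with $\varphi_\varepsilon \in W^{2,q}$ for all $q$.

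Finally, suppose $\rhoc \in W^{2-1/p,p}(\gammac)$ and $\rhod \in W^{2-1/p,p}(\gammad)$. Expand the equation as $-\varepsilon\Delta\rho_\varepsilon = \nabla\rho_\varepsilon\cdot\nabla\varphi_\varepsilon - \rho_\varepsilon^2$. First, $\rho_\varepsilon \in H^1$ gives a right-hand side in $L^2$, and Grisvard's regularity gives $\rho_\varepsilon \in H^2$. Sobolev embedding in 2D then puts $\nabla\rho_\varepsilon$ in every $L^r$, so the right-hand side lies in $L^p$. The $W^{2,p}$ regularity for the Dirichlet Laplacian on this curvilinear polygon, valid since there are no re-entrant corners and the data are compatible, then gives $\rho_\varepsilon \in W^{2,p}(\Omega)$.
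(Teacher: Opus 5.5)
Your proof is correct and has the same overall architecture as the paper's: Schauder's theorem on $W\subset L^2(\Omega)$ for the map $\tilde\rho\mapsto\varphi\mapsto\rho$, followed by a bootstrap for the $W^{2,p}$ regularity. The real difference is how you solve the linear $\rho$-problem.

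\emph{The Slotboom substitution is not needed.} The form $a(\rho,\chi)=\int_\Omega(\varepsilon\nabla\rho+\rho\nabla\varphi)\cdot\nabla\chi\intd x$ \emph{is} coercive on $H^1_0(\Omega)$ here. Since $-\Delta\varphi=\tilde\rho\ge0$, one has $\int_\Omega\chi\,\nabla\varphi\cdot\nabla\chi\intd x=\tfrac12\int_\Omega\tilde\rho\,\chi^2\intd x\ge0$, so $a(\chi,\chi)=\varepsilon\|\nabla\chi\|_\Omega^2+\tfrac12\int_\Omega\tilde\rho\,\chi^2\intd x$; this is \eqref{eq : a}. It is exactly the identity you use in your upper-bound step, so your non-coercivity remark only holds for a general drift.

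\emph{The paper's route.} It applies Lax--Milgram directly after lifting the boundary data, and reads the uniform $H^1$ bound off the same energy identity. It then obtains $0\le\rho\le K_+$ from \cite[Theorem~8.1]{gilbarg2001elliptic}, whose structural hypothesis is precisely $\int_\Omega\nabla\varphi\cdot\nabla v\intd x=\int_\Omega\tilde\rho\,v\intd x\ge0$ for $v\ge0$. Your truncation with $(\rho-K_+)^+$ is a self-contained proof of that special case.

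\emph{What each route buys.} Your change of variables gives a symmetric problem whose solvability and positivity do not use the sign of $\tilde\rho$. Its data are trivial, $u=e^{1/\varepsilon}\rhoc$ on $\gammac$ and $u=\rhod$ on $\gammad$, because $\varphi$ is constant on each boundary component. The price is an ellipticity ratio of order $e^{-\max\varphi/\varepsilon}$ instead of a coercivity constant $\varepsilon$, which is harmless at fixed $\varepsilon$.

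\emph{Compactness and continuity.} The paper extracts, from any sequence in $W$, a weakly convergent subsequence whose images converge strongly in $L^2$ to $G$ of the weak limit. You instead separate Rellich compactness from strong continuity plus uniqueness of the limit problem. Both suffice for Schauder.

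\emph{A caveat you inherit from the paper, not from your route.} A corner of $\gammad$ with interior angle $\omega\in(\pi/2,\pi)$ carries the singular behaviour $r^{\pi/\omega}\sin(\pi\theta/\omega)$. This function lies in $W^{2,q}$ only for $q<2/(2-\pi/\omega)$. So ``no re-entrant corner'' yields $W^{2,q}$ for some $q>2$, which gives $\nabla\varphi\in L^\infty$ and is all the fixed-point argument uses. In general it does not give $W^{2,q}$ for every $q$, and the same limitation affects the final $W^{2,p}$ claim.
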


The idea of the proof is to transform System~\eqref{sys:sys_complet}
into a fixpoint $\rho = G(\rho)$ by splitting it into two simpler
equations.  More precisely, given $\rho_0 \in W$, solve
\begin{equation} \label{sys:sys_phi}
  \begin{cases}
    -\Delta \varphi = \rho, &  \text{in } \Omega,  \\
    \varphi = 1,& \text{on }\gammac,\\
    \varphi = 0,& \text{on }\gammad,
  \end{cases}
\end{equation}
with $\rho = \rho_0$ for the unknown is $\varphi$, and with that
solution $\varphi_0$ solve
\begin{equation} \label{sys:sys_rho}
  \begin{cases}
    \Div \bigl( \varepsilon \nabla \rho_1+ \rho_1 \nabla\varphi_0\bigr) = 0,
    &  \text{ in } \Omega,\\
    \rho_1 = \rhoc, &  \text{on } \gammac, \\
    \rho_1 = \rhod, &  \text{on } \gammad,
  \end{cases}
\end{equation}
for $\rho_1$.  Set $G(\rho_0) \coloneq \rho_1$.

Before detailing the proof of Theorem~\ref{thm : existence diff},
let us start with a few preparation lemmas.

\begin{lemma}
  \label{lemma : bdvarphi0}
  \label{lemma:first-eq}
  The linear map $W \to H^1(\Omega): \rho \mapsto \varphi$ where
  $\varphi$ is the unique solution to~\eqref{sys:sys_phi}
  is well defined and continuous from $W$ endowed with the
  $L^2$-topology to $W^{2,q}(\Omega)$ for any $q \ge 2$.  Moreover its
  image is bounded in the sense that, there exists positive constants
  $C_q$, for every $q \ge 2$, and $C_\infty$ such that, for all
  $\varphi$ in the image, $\varphi \in C^1(\overline{\Omega})$,
  \begin{gather}
    \label{eq : phi bornée}
    \|\varphi\|_{W^{2,q}(\Omega)} \le C_q,
    \rlap{\quad\text{and}}\\
    \label{eq : grad phi bound}
    \|\nabla \varphi\|_{L^{\infty}(\Omega; \IR^2)}
    \le C_\infty.
  \end{gather}
\end{lemma}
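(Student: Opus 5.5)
We write the solution as $\varphi = \phie + \psi$, where $\phie$ is the electrostatic potential of \eqref{sys:elec} and $\psi \in H^1_0(\Omega)$ solves $-\Delta\psi = \rho$ in $\Omega$ with $\psi = 0$ on $\partial\Omega$. Since $\partial\Omega = \gammac \cup \gammad$ is a disjoint union, the boundary datum ($1$ on $\gammac$, $0$ on $\gammad$) is locally constant, and Lax--Milgram gives existence and uniqueness of $\psi$. The map $\rho\mapsto\psi$ is linear and continuous from $L^2(\Omega)$ to $H^1_0(\Omega)$. Hence $\rho\mapsto\varphi$ is affine (linear up to the fixed shift $\phie$) and well defined on $W \subset L^2(\Omega)$.

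The core of the argument is $W^{2,q}$ regularity for the Dirichlet Laplacian on $\Omega$.
\begin{itemize}
\item \textbf{Regularity of $\phie$.} Near $\gammac$ the boundary is $\mathcal C^{1,1}$, and $\gammac$ is separated from $\gammad$. We can therefore take a smooth cutoff $\chi$ equal to $1$ near $\gammac$ and $0$ near $\gammad$. Then $\phie - \chi$ solves a Dirichlet problem with homogeneous data and right-hand side $\Delta\chi \in L^\infty(\Omega)$.
\item \textbf{Regularity of $\psi$.} The right-hand side is $\rho \in L^\infty(\Omega) \subset L^q(\Omega)$ for every $q$.
\item \textbf{Regularity theorem.} For the curvilinear polygon $\gammad$, we invoke Grisvard's results for polygonal domains (\cite[Theorem~4.4.3.7 / Chapter~4]{grisvard}). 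If all interior angles $\omega_j$ are less than $\pi$, the Dirichlet Laplacian is an isomorphism $W^{2,q}\cap W^{1,q}_0 \to L^q$ for every $q$ such that no singular exponent $\lambda = k\pi/\omega_j$ lies in the critical strip $(0, 2-2/q]$. Since $\pi/\omega_j > 1$, this holds at least for $q$ with $2-2/q < \pi/\omega_j$. Because the corners are strictly convex, every $q$ up to some $q_0 > 2$ is covered. The $\mathcal C^{1,1}$ parts of the boundary give regularity for all $q$.
\end{itemize}

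The main obstacle is obtaining every $q \ge 2$, not just $q$ up to $\pi/\omega$-dependent thresholds. For $q$ large the corner singular functions $r^{\pi/\omega}\sin(\pi\theta/\omega)$ are not in $W^{2,q}$ when $\pi/\omega < 2$. I would handle this step as follows, checking which statement in Grisvard matches the hypothesis on the angles.
\begin{itemize}
\item First try to read the hypothesis ``angle less than $\pi$'' together with Grisvard's $\mathcal C^{1,1}$ curvilinear-polygon theory as sufficient.
\item If that fails, restrict the claim to $q\in[2,q_0)$ for some $q_0>2$. This is still enough for $\varphi\in C^1(\overline\Omega)$, because $W^{2,q}\hookrightarrow C^{1}(\overline\Omega)$ for any $q>2$ in dimension two.
\end{itemize}

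Granting the isomorphism, the estimates follow directly. For $\rho\in W$ we have
\begin{equation*}
\|\psi\|_{W^{2,q}} \le C\|\rho\|_{L^q} \le C K_+ |\Omega|^{1/q}.
\end{equation*}
Together with the fixed bound on $\|\phie\|_{W^{2,q}}$, this gives \eqref{eq : phi bornée}. Sobolev embedding $W^{2,q}\hookrightarrow C^1(\overline\Omega)$ for some fixed $q>2$ then gives \eqref{eq : grad phi bound} with $C_\infty = C_{\mathrm{emb}}\,C_q$.

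For continuity from $W$ with the $L^2$ topology into $W^{2,q}$, take $\rho_n\to\rho$ in $L^2$ with $\rho_n,\rho\in W$. The uniform bound $0\le\rho_n\le K_+$ and interpolation give
\begin{equation*}
\|\rho_n-\rho\|_{L^q} \le \|\rho_n-\rho\|_{L^\infty}^{1-2/q}\,\|\rho_n-\rho\|_{L^2}^{2/q} \le K_+^{1-2/q}\,\|\rho_n-\rho\|_{L^2}^{2/q} \to 0.
\end{equation*}
Linearity of $\rho\mapsto\psi$ and the $W^{2,q}$ estimate then give $\varphi_n\to\varphi$ in $W^{2,q}(\Omega)$.
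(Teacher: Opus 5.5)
Your argument follows the paper's proof. You write $\varphi=\phie+(\varphi-\phie)$, apply the $W^{2,q}$ estimate for the Dirichlet Laplacian to the difference with right-hand side $\rho\in L^\infty(\Omega)$, and finish with the triangle inequality and $W^{2,q}(\Omega)\hookrightarrow\mathcal C^1(\overline\Omega)$ for $q>2$. Your interpolation step $\|\rho_n-\rho\|_{L^q(\Omega)}\le K_+^{1-2/q}\|\rho_n-\rho\|_{L^2(\Omega)}^{2/q}$ is a real improvement. The paper declares continuity right after the bound $\|\varphi-\phie\|_{W^{2,q}(\Omega)}\le\tilde C_q\|\rho\|_{L^q(\Omega)}$, which on its own only gives continuity for the $L^q$-topology. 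The uniform bound $0\le\rho\le K_+$ on $W$ is exactly what lets you pass to the $L^2$-topology. You are also right that the map is affine, not linear.

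The point you leave open must be settled against your first option, which cannot succeed: under the stated hypothesis (angles less than $\pi$), the claim for every $q\ge 2$ is false.
\begin{itemize}
\item Since $0\in W$, $\phie$ lies in the image.
\item Near a straight-sided corner of $\gammad$ with angle $\omega\in\intervaloo{\pi/2,\pi}$, separation of variables gives $\phie=\sum_{k\ge1}c_k r^{k\pi/\omega}\sin(k\pi\theta/\omega)$.
\item The leading coefficient satisfies $c_1 r^{\pi/\omega}=\frac{2}{\omega}\int_0^\omega\phie(r,\theta)\sin(\pi\theta/\omega)\intd\theta>0$, because $\phie>0$ in $\Omega$.
\item The terms with $k\ge2$ have exponents $k\pi/\omega>2$, so they are harmless. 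Hence $|D^2\phie|\sim c\,r^{\pi/\omega-2}$ near the vertex, and $\phie\notin W^{2,q}(\Omega)$ once $q\ge 2\omega/(2\omega-\pi)$.
\end{itemize}
The paper does not see this because it simply cites Grisvard for all $q$. The assertion holds only when every corner angle is at most $\pi/2$, as in the half-disk of Section~\ref{sec:half-disk}, whose corners are right angles.

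Your fallback is therefore the correct statement, with $q_0=\min\{2\omega_j/(2\omega_j-\pi):\omega_j>\pi/2\}$ and $q_0=\infty$ if no such angle exists. It loses nothing the rest of the lemma needs. The regularity $\varphi\in\mathcal C^1(\overline\Omega)$, the bound \eqref{eq : grad phi bound}, and the uses of this lemma in Lemmas~\ref{lemme : rho_reg} and~\ref{lemma : G cont} require only $q=2$ and one fixed $q>2$.
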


The proof of this statement is quite standard but we sketch it briefly
for the reader's convenience.

\begin{proof}
  Thanks to \cite[Theorem~5.2.7]{grisvard}, there exists a unique
  $\varphi \in H^2(\Omega)$ satisfying~\eqref{sys:sys_phi}.
  Furthermore as $\rho \in L^\infty(\Omega)$, we also have
  $\varphi \in W^{2,q}(\Omega)$ for every $q \ge 2$.
    As a consequence, the electrostatic solution $\phie \in H^1(\Omega)$
  defined by \eqref{sys:elec} also belongs to $W^{2,q}(\Omega)$, for
  all $q \ge 2$.  Hence the difference $\varphi - \phie$ satisfies
  \begin{equation*}
    \begin{cases}
      -\Delta (\varphi - \phie) = \rho, &  \text{in } \Omega,  \\
      \varphi - \phie = 0,& \text{on } \partial\Omega
                            = \gammac \cup \gammad.
    \end{cases}
  \end{equation*}
  Thus there exists a constant $\tilde C_q>0$ such that
  \begin{equation*}
    \|\varphi - \phie\|_{W^{2,q}(\Omega)}
    \le \tilde C_q \|\rho\|_{L^q(\Omega)}
    \le \tilde C_q K_+ |\Omega|^{1/q},
  \end{equation*}
  where the last inequality results from the fact that $\rho$ belongs
  to $W$.  Thus the continuity is established.  In addition, estimate
  \eqref{eq : phi bornée} directly follows from the triangular
  inequality with
  $C_q \coloneq \|\phie\|_{W^{2,q}(\Omega)} + \tilde C_q K_+
  |\Omega|^{1/q}$.

  Finally, from the continuous embedding
  $W^{2,q}(\Omega) \hookrightarrow \mathcal{C}^1(\overline{\Omega})$ for
  $q > 2$, we deduce that $\nabla \varphi$ is in
  $L^\infty(\Omega; \IR^2)$ with
  \begin{equation}
    \|\nabla \varphi\|_{L^{\infty}(\Omega; \IR^2)}
    \le C_\infty \coloneq K_q C_q,
  \end{equation}
  where $K_q$ is the norm of the embedding of $W^{2,q}(\Omega)$ into
  $\mathcal{C}^1(\overline{\Omega})$.
\end{proof}

\begin{lemma}
  \label{lemme : rho_reg}
  Under the assumptions of Theorem \ref{thm : existence diff}, for
  $\rho_0 \in W$, there exists a unique solution
  $\rho_1 \in W \cap H^1(\Omega)$ to \eqref{sys:sys_rho} with
  $\varphi_0$ a solution to \eqref{sys:sys_phi}.
    Moreover, if $\rhoc \in W^{2-1/p,p}(\gammac)$ and
  $\rhod \in W^{2-1/p,p}(\gammad)$ for some $p > 2$, then
  $\rho_1 \in W^{2,p}(\Omega)$.
\end{lemma}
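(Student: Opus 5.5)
We prove Lemma~\ref{lemme : rho_reg} with a Slotboom-type change of unknown that turns \eqref{sys:sys_rho} into a symmetric, coercive problem. By Lemma~\ref{lemma:first-eq}, $\varphi_0 \in W^{2,q}(\Omega) \subset \mathcal{C}^1(\overline\Omega)$ with $\|\nabla\varphi_0\|_\infty \le C_\infty$. Write $\rho_1 = e^{-\varphi_0/\varepsilon} u$. Then $\varepsilon\nabla\rho_1 + \rho_1\nabla\varphi_0 = \varepsilon e^{-\varphi_0/\varepsilon}\nabla u$, so \eqref{sys:sys_rho} becomes
\begin{equation*}
  \Div\bigl(e^{-\varphi_0/\varepsilon}\nabla u\bigr) = 0 \text{ in } \Omega, \qquad u = e^{\varphi_0/\varepsilon}\rho_{\mathrm{c}} = e^{1/\varepsilon}\rhoc \text{ on } \gammac, \qquad u = \rhod \text{ on } \gammad .
\end{equation*}

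\emph{Existence and uniqueness.} The weight $a = e^{-\varphi_0/\varepsilon}$ is Lipschitz on $\overline\Omega$ and bounded above and below by positive constants. The boundary datum $g$ on $\partial\Omega$ is in $H^{1/2}(\partial\Omega)$: $\gammac$ and $\gammad$ are disjoint closed curves, so a piecewise definition with constant multipliers on each component is harmless. Lift $g$ to $G \in H^1(\Omega)$ and apply Lax--Milgram to $u - G \in H^1_0(\Omega)$. This gives a unique $u \in H^1(\Omega)$, hence a unique $\rho_1 = e^{-\varphi_0/\varepsilon}u \in H^1(\Omega)$, since $e^{-\varphi_0/\varepsilon} \in W^{1,\infty}$. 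Uniqueness for $\rho_1$ follows because the change of variables is a bijection of $H^1$ preserving the Dirichlet structure.

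\emph{Bounds $0 \le \rho_1 \le K_+$.} This is the main point. Nonnegativity is direct: the weak maximum principle for the divergence-form operator $\Div(a\nabla\cdot)$ gives $u \ge 0$ from $g \ge 0$. The upper bound cannot be read off from $u$ naively, because $\sup u$ would give $e^{1/\varepsilon}K_+$. Instead, work directly with $\rho_1$ and test \eqref{sys:sys_rho} with $(\rho_1 - K_+)^+ \in H^1_0(\Omega)$; this is admissible since $\rho_1 \le K_+$ on $\partial\Omega$. This yields
\begin{equation*}
  \varepsilon\int_\Omega |\nabla (\rho_1-K_+)^+|^2 + \int_\Omega \rho_1 \nabla\varphi_0\cdot\nabla(\rho_1-K_+)^+ = 0 .
\end{equation*}
Write $\rho_1 = (\rho_1 - K_+)^+ + K_+$ on the support of $(\rho_1-K_+)^+$. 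The constant part contributes $K_+\int_\Omega \nabla\varphi_0\cdot\nabla w$ with $w = (\rho_1-K_+)^+$. Integrating by parts, this equals $K_+\int_\Omega (-\Delta\varphi_0) w = K_+\int_\Omega \rho_0 w \ge 0$, which is exactly where $\rho_0 \ge 0$ enters. The quadratic part is $\int_\Omega w\nabla\varphi_0\cdot\nabla w = \frac12\int_\Omega \nabla\varphi_0\cdot\nabla(w^2) = \frac12\int_\Omega \rho_0 w^2 \ge 0$. Both terms are therefore nonnegative, which forces $\nabla w = 0$, so $w = 0$ and $\rho_1 \le K_+$. The same argument with $\rho_1^-$ reproves $\rho_1 \ge 0$ directly; there the signs work similarly, since the constant is $0$ and only the quadratic term remains. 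Hence $\rho_1 \in W$.

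\emph{Regularity.} Expand the equation as $\varepsilon\Delta\rho_1 + \nabla\varphi_0\cdot\nabla\rho_1 - \rho_0\rho_1 = 0$, i.e.\ $-\varepsilon\Delta\rho_1 = \nabla\varphi_0\cdot\nabla\rho_1 - \rho_0\rho_1$. Bootstrap from $\rho_1 \in H^1$. The right-hand side lies in $L^2$, so, taking an $H^2$ lifting of the data, $\rho_1 \in H^2$ by \cite[Theorem~5.2.7]{grisvard}, the same polygonal-domain result used in Lemma~\ref{lemma:first-eq}. Then $\nabla\rho_1 \in L^r$ for all $r < \infty$, so the right-hand side is in $L^p$. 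With boundary data in $W^{2-1/p,p}$ (with compatibility at the corners of $\gammad$, handled as in the paper's convention) and the convex-corner assumption, the $W^{2,p}$ result of Grisvard gives $\rho_1 \in W^{2,p}(\Omega)$.

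The expected obstacle is the corner regularity. This requires the no re-entrant corner hypothesis together with $p$ not too large relative to the angles, and I would defer to Grisvard's results exactly as in Lemma~\ref{lemma:first-eq}.
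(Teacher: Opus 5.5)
Your argument is correct. For existence, uniqueness and the bounds it takes a different route from the paper; the regularity part is the same.

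The paper never changes unknowns. It applies Lax--Milgram to $\rhot=\rho_1-r$ with the nonsymmetric form \eqref{def:a}. Coercivity comes from the identity \eqref{eq : a}, $a(\chi,\chi)=\int_\Omega\varepsilon|\nabla\chi|^2+\tfrac12\rho_0\chi^2\intd x$, so it uses $\rho_0\ge0$. The paper then gets $0\le\rho_1\le K_+$ by quoting \cite[Theorem~8.1]{gilbarg2001elliptic}, whose structural hypothesis (8.8) reduces here to $\int_\Omega\nabla\varphi_0\cdot\nabla v\intd x=\int_\Omega\rho_0 v\intd x\ge0$ for $v\ge0$.

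Your substitution $\rho_1=e^{-\varphi_0/\varepsilon}u$ symmetrizes the operator. As a result, well-posedness and $\rho_1\ge0$ hold for any Lipschitz $\varphi_0$, whatever the sign of $\rho_0$. That sign enters only in the upper bound. You rightly observe that the bound on $u$ would only give $e^{1/\varepsilon}K_+$. Your truncation with $(\rho_1-K_+)^+$ is then a self-contained proof of exactly the special case of the cited maximum principle. Its two nonnegative terms are the same computation as \eqref{eq : a}.

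The paper's route is shorter, and its coercivity identity is reused verbatim in Lemma~\ref{lemma : G cont} to bound $\rhotn$ in $H^1_0(\Omega)$. Yours isolates the exact role of $\rho_0\ge0$. The price is a weight whose ellipticity constants degenerate exponentially as $\varepsilon\to0$, which is harmless here since $\varepsilon$ is fixed.

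The regularity bootstrap is the same as the paper's. Your caveat about the corners is well founded and applies equally to the paper. Near a straight-sided corner of opening $\omega\in(\pi/2,\pi)$, the cut-off singular function $r^{\pi/\omega}\sin(\pi\theta/\omega)$ has smooth Laplacian and zero trace. Yet it belongs to $W^{2,p}$ only for $p<2/(2-\pi/\omega)$. So the appeal to Grisvard for arbitrary $p>2$ tacitly needs such a restriction on $p$, or corner angles at most $\pi/2$.
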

\begin{proof}
  We are looking for a solution $\rho_1$ of \eqref{sys:sys_rho}.
  Due to the non homogeneous boundary conditions, we first consider
  a lifting $r \in H^1(\Omega)$ of the boundary
  data \cite[Theorem~1.5.1.3]{grisvard}, namely
  such that
  \begin{equation}\label{eq : r}
    \begin{cases}
      r = \rhoc, & \text{ on } \gammac, \\
      r = \rhod, & \text{ on } \gammad.
    \end{cases}
  \end{equation}
  Now, we are looking for $\rhot \coloneq \rho_1 -r$ in
  $H^1_0(\Omega)$ which satisfies
  \begin{equation}
    \label{eq:tilde-rho1}
    \Div \bigl( \varepsilon \nabla \Tilde{\rho_1}
    + \Tilde{\rho_1} \nabla\varphi_0\bigr)
    = - \Div \bigl( \varepsilon \nabla r + r \nabla\varphi_0\bigr),
    \text{ in } \Omega,
  \end{equation}
  or in the variational form
  \begin{equation}\label{eq : var rho}
    \forall \chi \in H^1_0(\Omega), \quad
    \int_\Omega \bigl(\varepsilon \nabla \Tilde{\rho_1}
    + \Tilde{\rho_1} \nabla \varphi_0\bigr) \nabla \chi \intd x
    = -\int_\Omega \bigl( \varepsilon \nabla r
    + r \nabla\varphi_0\bigr) \nabla \chi \intd x .
  \end{equation}
  We assert that the bilinear form $a$ defined by
  \begin{equation}
    \label{def:a}
    a : H^1_0(\Omega)\times H^1_0(\Omega) \to \IR: 
    (\rho,\chi) \mapsto
    \int_\Omega  \bigl(\varepsilon \nabla \rho
    + \rho \nabla \varphi_0\bigr) \nabla \chi \intd x,
  \end{equation}
  is continuous and coercive. Indeed, the continuity stems from
  Cauchy-Schwarz's inequality by using
  $\nabla \varphi_0 \in W^{1,q}(\Omega;\mathbb{R}^2)$ and the continuous embedding
  $W^{1,q}(\Omega;\mathbb{R}^2) \hookrightarrow \mathcal{C}(\bar{\Omega};\mathbb{R}^2)$ for
  $q>2$. Furthermore, by Green's formula, for
  $\chi \in H^1_0(\Omega)$, we have
  \begin{align*}
    \int_\Omega \chi \nabla \varphi_0 \nabla \chi \intd x
    & = - \int_\Omega \Div \bigl(\chi \nabla \varphi_0 \bigr) \chi \intd x \\
    & = - \int_\Omega \nabla \chi \nabla \varphi_0 \chi \intd x
      - \int_\Omega \chi^2 \Delta \varphi_0 \intd x.
  \end{align*}
  Since $-\Delta \varphi_0 = \rho_0$, it follows that
  \begin{equation*}
    2 \int_\Omega \chi \nabla \varphi_0 \nabla \chi \intd x
    = \int_\Omega \chi^2 \rho_0 \intd x \geq 0.
  \end{equation*}
  So
  \begin{equation}\label{eq : a}
    a(\chi,\chi) = \int_\Omega \varepsilon \lvert \nabla \chi\rvert^2
    + \tfrac{1}{2}\rho_0 \chi^2 \intd x.
  \end{equation}
  Therefore $a$ is coercive in $H^1_0(\Omega)$ and, thanks to the
  Lax-Milgram theorem, there exists a unique solution
  $\rhot \in H^1_0(\Omega)$ of \eqref{eq : var rho}. Hence, we also
  conclude that there exists a unique $\rho_1 \in H^1(\Omega)$
  solution of \eqref{sys:sys_rho}.

  We also need to show that $\rho_1 \in W$.  Notice that we can
  apply the maximum principle \cite[Theorem~8.1]{gilbarg2001elliptic}
  to the system \eqref{sys:sys_rho} because
  $\Div \nabla \varphi_0 = -\rho_0 \leq 0$ (see condition~8.8 in
  \cite{gilbarg2001elliptic}).  Therefore
  \begin{equation*}
    0 \leq \rho_1 \leq K_+,
  \end{equation*}
  because $\rho^-_{\text{c}} = \rho^-_{\text{d}} = 0$.  Thus
  $\rho_1 \in W \cap H^1(\Omega)$.

  Now let us assume that $\rhoc \in W^{2-1/p,p}(\gammac)$ and
  $\rhod \in W^{2-1/p,p}(\gammad)$ for some $p > 2$ and
  show that $\rho_1 \in W^{2,p}(\Omega)$.
  First note that one can choose $r \in W^{2,p}(\Omega)
  \hookrightarrow H^2(\Omega)$
  (see~\cite[Theorem~1.5.2.8]{grisvard} where it is standard that the
  compatibility conditions can be satisfied with a suitable choice
  of the normal derivative on the boundary $\gammad$).
  Expanding \eqref{eq:tilde-rho1} and using that $-\Delta \varphi_0 =
  \rho_0$ yields 
  \begin{equation}\label{eq : rho_reg}
    \Div(\varepsilon \nabla \Tilde{\rho_1})
    = \rhot \rho_0 - \nabla \rhot \nabla \varphi_0
    -\varepsilon \Delta r - \nabla r \nabla \varphi_0
    + r \rho_0.
  \end{equation}
  Since $\nabla \varphi_0 \in L^{\infty}(\Omega; \mathbb{R}^2)$ and $\rho_0 \in L^\infty(\Omega)$, we
  deduce from \eqref{eq : rho_reg} that $\Delta \rhot \in L^2(\Omega)$
  and so $\rhot \in H^2(\Omega)$ with the help of \cite[Theorem
    5.2.7]{grisvard}.  Due to the continuous embedding
  $H^1(\Omega;\mathbb{R}^2) \hookrightarrow L^p(\Omega;\mathbb{R}^2)$,
  $\nabla \rhot \in L^p(\Omega;\mathbb{R}^2)$. Therefore,  Equality \eqref{eq : rho_reg} implies that $\Delta \rhot \in L^p(\Omega)$ and so that
  $\rhot \in W^{2,p}(\Omega)$, again by
  \cite[Theorem~5.2.7]{grisvard}.  Thus $\rho_1 \in W^{2,p}(\Omega)$.
\end{proof}

\begin{lemma}\label{lemma : G cont}
  Under the assumptions of Theorem \ref{thm : existence diff},
  the functional $G$ is completely continuous from $W$ to $W$.
\end{lemma}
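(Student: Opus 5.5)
We split $G$ into its two stages and show that the second stage gains regularity while the first is continuous. By Lemma~\ref{lemme : rho_reg}, $G$ maps $W$ into $W\cap H^1(\Omega)$, so $G(W)\subset W$. For complete continuity we must show two things: that $G$ is continuous for the $L^2$-topology, and that $G(W)$ is relatively compact in $L^2(\Omega)$.

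\emph{Uniform $H^1$ bound.} Take $\rho_0\in W$ and let $\varphi_0$ be the solution of \eqref{sys:sys_phi}. Testing \eqref{eq : var rho} with $\chi=\rhot$ and using \eqref{eq : a} gives
\begin{equation*}
  \varepsilon|\rhot|_{1,\Omega}^2\le a(\rhot,\rhot)
  \le \bigl(\varepsilon|r|_{1,\Omega}+C_\infty\|r\|_{\Omega}\bigr)|\rhot|_{1,\Omega},
\end{equation*}
where $C_\infty$ is the constant from \eqref{eq : grad phi bound}. The lifting $r$ is fixed and does not depend on $\rho_0$. Hence $\|G(\rho_0)\|_{1,\Omega}\le \|r\|_{1,\Omega}+C$, with $C$ independent of $\rho_0\in W$ (Poincaré's inequality handles the $L^2$ part of $\rhot$). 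By Rellich's theorem, the compact embedding $H^1(\Omega)\hookrightarrow L^2(\Omega)$, the set $G(W)$ is relatively compact in $L^2(\Omega)$.

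\emph{Continuity.} Let $\rho_{0,n}\to\rho_0$ in $L^2$ with $\rho_{0,n}\in W$. By Lemma~\ref{lemma:first-eq}, $\varphi_{0,n}\to\varphi_0$ in $W^{2,q}(\Omega)$ for $q>2$, and therefore $\nabla\varphi_{0,n}\to\nabla\varphi_0$ in $L^\infty(\Omega;\IR^2)$. Set $w_n=\rhotn-\rhot$. Subtracting the two variational formulations \eqref{eq : var rho} gives, for all $\chi\in H^1_0(\Omega)$,
\begin{equation*}
  \int_\Omega\bigl(\varepsilon\nabla w_n+w_n\nabla\varphi_{0,n}\bigr)\nabla\chi\intd x
  =-\int_\Omega(\rhot+r)\bigl(\nabla\varphi_{0,n}-\nabla\varphi_0\bigr)\nabla\chi\intd x .
\end{equation*}
The left-hand side is the form $a$ built on $\varphi_{0,n}$, and by \eqref{eq : a} it is coercive with constant $\varepsilon$ uniformly in $n$ (because $\rho_{0,n}\ge0$). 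Taking $\chi=w_n$ yields
\begin{equation*}
  \varepsilon|w_n|_{1,\Omega}\le \|\rhot+r\|_{\Omega}\,\|\nabla\varphi_{0,n}-\nabla\varphi_0\|_{L^\infty}\to0 .
\end{equation*}
So $G(\rho_{0,n})\to G(\rho_0)$ in $H^1(\Omega)$, and in particular in $L^2$.

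The only delicate point is this continuity step. The key is the uniform coercivity, which rests on the sign $\rho_{0,n}\ge 0$ exactly as in \eqref{eq : a}, combined with the $L^\infty$ convergence of the gradients. The latter is where Lemma~\ref{lemma:first-eq} with $q>2$ is essential: mere $L^2$ convergence of $\nabla\varphi_{0,n}$ would not suffice to control the product with $\rhot\in H^1$ on the right-hand side. If one prefers, this could alternatively be done with $L^4$ bounds via $H^1\hookrightarrow L^4$ in two dimensions.
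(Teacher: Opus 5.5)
Your proof is correct, but it is organized differently from the paper's. The paper runs a single compactness argument. From a sequence in $W$ it extracts $\rho_{0,n}\rightharpoonup\rho_0$ weakly in $L^2(\Omega)$, and it gets $\varphi_{0,n}\to\varphi_0$ strongly in $H^s(\Omega)$, $s<2$, from the uniform $H^2$ bound. It then uses the same energy estimate as yours to extract $\rhotn\rightharpoonup\rhot$ in $H^1_0(\Omega)$. Finally it passes to the limit in the variational formulation, using a product estimate from Grisvard, and identifies the limit as $G(\rho_0)$ by Lax--Milgram uniqueness. This gives weak-to-strong sequential continuity, which contains compactness, and also continuity after a subsequence-of-subsequence argument that the paper leaves implicit.

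You instead separate the two properties. You get compactness from the uniform $H^1$ bound plus Rellich. You get continuity from a direct stability estimate: you subtract the two variational problems and use that the coercivity constant $\varepsilon$ of the form built on $\varphi_{0,n}$ is uniform in $n$, because $\rho_{0,n}\ge 0$. This buys you several things:
\begin{itemize}
\item an explicit bound $\varepsilon\,|G(\rho_{0,n})-G(\rho_0)|_{1,\Omega}\le \|G(\rho_0)\|_{\Omega}\,\|\nabla(\varphi_{0,n}-\varphi_0)\|_{L^\infty(\Omega;\IR^2)}$;
\item continuity in $H^1$ rather than only in $L^2$;
\item no subsequence extraction or limit identification.
\end{itemize}
It does not directly give the weak-to-strong property. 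That property is not needed, however, for Schauder's theorem in the form ``continuous self-map of a closed convex set with relatively compact range.''

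One small correction to your closing remark: the $L^\infty$ convergence of the gradients is not essential. The factor on the right-hand side is $\rhot+r=G(\rho_0)\in W$, so it is bounded by $K_+$. Hence the right-hand side is at most $K_+\|\nabla(\varphi_{0,n}-\varphi_0)\|_\Omega\,|w_n|_{1,\Omega}$. Plain $L^2$ convergence of $\nabla\varphi_{0,n}$ already suffices, and it follows at once from testing $-\Delta(\varphi_{0,n}-\varphi_0)=\rho_{0,n}-\rho_0$ with $\varphi_{0,n}-\varphi_0\in H^1_0(\Omega)$.
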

\begin{proof}
  Let $(\rho_{0,n})_{n\in \IN}$ be a bounded sequence included in $W$.
  For every $n$, let
  $\varphi_{0,n}$ be the solution to \eqref{sys:sys_phi} with
  $\rho_{0,n}$ instead of $\rho$ and
  $\rho_{1,n} = G(\rho_{0,n})$ be the solution to \eqref{sys:sys_rho} with
  $\varphi_{0,n}$ instead of $\varphi_0$.
  Let us prove that, up to a subsequence, $(\rho_{1,n})_n$ strongly
  converges in~$W$.

  First, there exists a subsequence of $(\rho_{0,n})_n$, still
  denoted by $(\rho_{0,n})_n$, and $\rho_0 \in L^2(\Omega)$ such
  that
  \begin{equation*}
    \rho_{0,n} \rightharpoonup \rho_0
    \hspace*{2ex} \text{ weakly in } L^2(\Omega),
  \end{equation*}
  with $\rho_0 \in W$ since $W$ is convex, whence weakly closed.

  Secondly, Lemma~\ref{lemma:first-eq} 
  implies that the sequence $(\varphi_{0,n})_{n}$ is
  bounded in $H^{2}(\Omega)$. Hence by the compact embedding of
  $H^{2}(\Omega)$ into $H^s(\Omega)$, for any $s\in \intervalco{0,2}$,
  passing if necessary to a subsequence,
  there exists $\varphi_0 \in H^s(\Omega)$,
  such that
  \begin{equation} \label{eq : strongtildephi1n}
    \varphi_{0,n} \rightarrow \varphi_0
    \hspace*{2ex} \text{ strongly in } H^s(\Omega)
    \text{ for all } s \in \intervalco{0, 2}.
  \end{equation}
  Using the variational formulation of \eqref{sys:sys_phi}, we
  deduce that $\varphi_0$ is solution of \eqref{sys:sys_phi} with
  $\rho = \rho_0$ and
  Lemma~\ref{lemma : bdvarphi0} implies that
  $\|\nabla\varphi_0\|_{L^{\infty}(\Omega; \IR^2)} \le C_\infty$.

  Thirdly, let $\rhotn \coloneq \rho_{1,n}-r$ for every $n \in \IN$
  where $r \in H^1_0(\Omega)$ is defined by \eqref{eq : r}. In
  \eqref{eq : var rho}, with $\rhotn$ and $\chi = \rhotn$, we will
  have, thanks to Cauchy-Schwarz's inequality, \eqref{eq : a} and
  \eqref{eq : grad phi bound},
  \begin{align*}
    \varepsilon \int_\Omega \lvert \nabla \Tilde{\rho}_{1,n}\rvert^2 \intd x
    & \le a(\rhotn,\rhotn)
      = - \int_\Omega \bigl(\varepsilon \nabla r
      + r \nabla \varphi_{0,n}\bigr) \nabla \Tilde{\rho}_{1,n} \intd x \\
    &\le \varepsilon \|\nabla r\|_\Omega \,
      \|\nabla \Tilde{\rho}_{1,n}\|_\Omega
      + C_\infty \|r\|_\Omega
      \| \nabla \Tilde{\rho}_{1,n}\|_\Omega.
  \end{align*}
  Therefore, $(\rhotn)_n$ is bounded in $H^1_0(\Omega)$, thus, up to
  a subsequence, there exists $\rhot \in H^1_0(\Omega)$,
  \begin{equation*}
    \Tilde{\rho}_{1,n} \rightharpoonup \Tilde{\rho}_1
    \hspace*{2ex} \text{ weakly in } H^1_0(\Omega),
  \end{equation*}
  and
  \begin{equation}\label{eq : strongtilderuo1n}
    \Tilde{\rho}_{1,n} \rightarrow \Tilde{\rho}_1
    \hspace*{2ex} \text{ strongly in } H^t(\Omega),
    \text{ for all } t \in \intervalco{0, 1}.
  \end{equation}

  To show that $\rho_1$ is a solution of \eqref{sys:sys_rho}, let us
  establish that $\rhot$ is solution of \eqref{eq : var rho}.  We know
  that
  \begin{equation} \label{eq : int}
    \forall \chi\in H^1_0(\Omega),\quad
    \int_\Omega \bigl(\varepsilon \nabla \Tilde{\rho}_{1,n}
    + \Tilde{\rho}_{1,n} \nabla \varphi_{0,n}\bigr)
    \nabla \chi \intd x
    = -\int_\Omega \bigl(\varepsilon \nabla r
    + r \nabla \varphi_{0,n}\bigr) \nabla \chi \intd x.
  \end{equation}
  But, by \eqref{eq : strongtildephi1n} and \eqref{eq : strongtilderuo1n} and the help of
  \cite[Theorem~1.4.4.2]{grisvard}, we deduce that
  \begin{equation*}
    \Tilde{\rho}_{1,n} \nabla \varphi_{0,n} \rightarrow
    \Tilde{\rho}_1 \nabla \varphi_0
    \hspace*{2ex} \text{ strongly in } L^2(\Omega;\IR^2).
  \end{equation*}
  Consequently, because $(\rhotn)_n$ converges weakly in $H^1_0(\Omega)$
  to $\rhot$ and $(\varphi_{0,n})_n$ converges strongly to $\varphi_0$ in
  $H^1(\Omega)$, it is possible to take the limit in equation
  \eqref{eq : int} . Therefore, we deduce that $(\rho_{1,n})_n$
  strongly converges in $L^2(\Omega)$ to
  $\rho_1 = \rhot +r = G(\rho_0) \in W$ solution of \eqref{sys:sys_rho}. By Lemma \ref{lemme : rho_reg}, we also have
  $\rho_1 \in W^{2,p}(\Omega) \cap W$.
\end{proof}

\begin{proof}[\proofname\ of Theorem~\ref{thm : existence diff}]
  As mentioned previously, we are using a fixed point argument to
  solve this system.  To this end, we first split it into two
  subsystems.   Given $\rho_0 \in W$, we successively solve
  \eqref{sys:sys_phi} with $\rho = \rho_0$ to get $\varphi_0$ and
  then~\eqref{sys:sys_rho}
  for $\rho_1$ and define $G(\rho_0) \coloneq \rho_1$.

  The existence and regularity of $\varphi_0$ is given by
  Lemma~\ref{lemma:first-eq}.
  Given such a $\varphi_0$, Lemma \ref{lemme : rho_reg} implies that
  Problem \eqref{sys:sys_rho} has a unique solution
  $\rho_1 \in H^1(\Omega) \cap W$ and
  thanks to Lemma \ref{lemma : G cont}, $G$ is completely continuous
  from $W$ to $W$.  Hence by Schauder's fixed point theorem
  \cite[Theorem~1.2.3]{schauder}, $G$ has a fixed point in $W$. In
  other words, there exists a solution
  $(\varphi_\varepsilon, \rho_\varepsilon)$ of \eqref{sys:sys_complet} with $\varphi_\varepsilon \in W^{2,q}(\Omega)$ for
  every $q \geq 2$ and $\rho_\varepsilon \in W \cap H^1(\Omega)$.

  Finally, if $\rhoc \in W^{2-1/p,p}(\gammac)$ and
  $\rhod \in W^{2-1/p,p}(\gammad)$, Lemma~\ref{lemme : rho_reg}
  gives the desired regularity on $\rho_\varepsilon$.
\end{proof}

\section{Existence of infinitely many solutions in a unipolar case}
\label{s:unipolar}

Thanks to Theorem \ref{thm : existence diff} of the previous
section we can show the existence of a nontrivial
solution $(\varphi,\rho)$ to
the following system:
\begin{equation} \label{sys:sys_sans_A}
  \begin{cases}
    -\Delta \varphi = \rho, & \text{in } \Omega,  \\
    \Div(\rho\nabla \varphi) = 0, & \text{in } \Omega,  \\
    \varphi = 1, &  \text{on } \gammac,\\
    \varphi = 0,&  \text{on } \gammad.
  \end{cases}
\end{equation}
Note that this system is not exactly the one we want to solve because
it does not contain the Neumann boundary condition (see the
boundary conditions \eqref{sys:BC} in the introduction). But
once a solution $(\varphi,\rho)$ of \eqref{sys:sys_sans_A} is
known, we can say that it is a solution of
\eqref{sys:gen}--\eqref{sys:BC} for some $A$ being given by
${\partial \varphi}/{\partial\normal}$ (a non-constant
function in general).

\begin{theorem}
  \label{thm : existence_unipolar}
  Problem~\eqref{sys:sys_sans_A} possesses infinitely many nontrivial
  solutions.
  More precisely, for all $n \in \IN$, there exists a solution
  $(\varphi^*_n, \rho^*_n)\in H^2(\Omega)\times W$
  to~\eqref{sys:sys_sans_A} with $\rho^*_n \not \equiv 0$,
  $\varphi^*_n\in W^{2,q}(\Omega)$ for every $q>2$, and
  $\|\rho^*_n\|_{L^\infty(\Omega)} \xrightarrow[n\to \infty]{} 0$.
\end{theorem}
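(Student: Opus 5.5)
We obtain solutions of \eqref{sys:sys_sans_A} as limits of the solutions of \eqref{sys:sys_complet} given by Theorem~\ref{thm : existence diff} as $\varepsilon \to 0$, with the boundary data $\rhoc,\rhod$ chosen to produce a sequence of distinct nontrivial limits. Fix $n\in\IN$ and take constant data, say $\rhoc = \rhod = K_n := 1/(n+1)$. For each $\varepsilon>0$, Theorem~\ref{thm : existence diff} gives $(\varphi_\varepsilon,\rho_\varepsilon)$ with $0\le\rho_\varepsilon\le K_n$. By Lemma~\ref{lemma:first-eq}, $(\varphi_\varepsilon)$ is bounded in $W^{2,q}(\Omega)$ for every $q\ge2$, uniformly in $\varepsilon$. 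Extracting a sequence $\varepsilon_k\to0$, we get $\rho_{\varepsilon_k}\rightharpoonup\rho^*_n$ weakly-$*$ in $L^\infty$, and $\rho^*_n\in W$ with $\|\rho^*_n\|_{L^\infty}\le K_n$. By compact embedding we also get $\varphi_{\varepsilon_k}\to\varphi^*_n$ weakly in $W^{2,q}$ and strongly in $C^1(\overline\Omega)$. The Poisson equation and the Dirichlet conditions on $\varphi$ pass to the limit directly; the $W^{2,q}$ regularity of $\varphi^*_n$ follows from the uniform bound.

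For the second equation we use the weak form with test functions $\chi\in C_c^\infty(\Omega)$:
\[
\varepsilon_k\int_\Omega\rho_{\varepsilon_k}\Delta\chi\intd x-\int_\Omega\rho_{\varepsilon_k}\nabla\varphi_{\varepsilon_k}\cdot\nabla\chi\intd x=0 .
\]
Writing it this way moves the derivative off $\rho_\varepsilon$, so no $H^1$ bound on $\rho_\varepsilon$ uniform in $\varepsilon$ is needed; losing the Dirichlet condition on $\rho$ is consistent with that. The first term is $O(\varepsilon_k K_n)$ and tends to $0$. The second is the product of a weakly-$*$ converging sequence with a strongly (uniformly) converging one, so it converges to $\int\rho^*_n\nabla\varphi^*_n\cdot\nabla\chi$. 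Hence $\Div(\rho^*_n\nabla\varphi^*_n)=0$ in $\mathcal D'(\Omega)$.

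The main obstacle is to show that $\rho^*_n\not\equiv0$, since weak limits may vanish. We plan to use a conserved flux. Integrating the $\rho_\varepsilon$ equation against $\theta=\phie$ (harmonic, equal to $1$ on $\gammac$ and $0$ on $\gammad$), or simply against a function $\theta\in C^\infty(\overline\Omega)$ equal to $1$ near $\gammac$ and $0$ near $\gammad$, gives
\[
\int_\Omega(\varepsilon\nabla\rho_\varepsilon+\rho_\varepsilon\nabla\varphi_\varepsilon)\cdot\nabla\theta\intd x=0,
\]
but this alone does not force nontriviality. A more robust route is a lower bound. The idea is to build a subsolution to the $\rho_\varepsilon$-equation, for instance using the comparison principle of \cite[Theorem~8.1]{gilbarg2001elliptic}. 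Consider the drift $-\nabla\varphi_\varepsilon$, which points from the conductor towards $\gammad$, since $\varphi_\varepsilon\ge\phie$ and $\partial_{\normal}\varphi_\varepsilon<0$ near $\gammac$. The equation $\varepsilon\Delta\rho+\nabla\varphi\cdot\nabla\rho=\rho\rho_0\ge0$ (at the fixed point $\rho_0=\rho_\varepsilon$) then shows that $\rho_\varepsilon$ is transported outward from $\gammac$ where it equals $K_n$. A subsolution like $K_n e^{-M\rho\text{-free}}$ expression is awkward, so we try a cleaner alternative instead: the $L^1$ mass. Take $\chi = 1-\varphi_\varepsilon$, noting that $\rho_\varepsilon=\rhoc$ on $\gammac$ with constant data. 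Integrating by parts in $-\Delta\varphi_\varepsilon=\rho_\varepsilon$ against suitable functions, we aim to show
\[
\int_\Omega\rho_\varepsilon\intd x\ge c(K_n)>0
\]
uniformly in $\varepsilon$. This would give $\int\rho^*_n>0$. If this estimate resists, we instead argue by contradiction: if $\rho^*_n\equiv0$, then $\varphi^*_n=\phie$, and $\nabla\varphi_{\varepsilon_k}\to\nabla\phie$ uniformly. Near $\gammac$ the drift is then uniformly nondegenerate and transversal, with $-\partial_{\normal}\phie\ge c>0$ by Hopf's lemma. A local barrier in a boundary strip then shows $\rho_{\varepsilon_k}\ge K_n/2$ on a strip of fixed width, since the drift pushes mass inward at an $\varepsilon$-independent rate. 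This contradicts $\rho_{\varepsilon_k}\rightharpoonup0$. Constructing this barrier is the delicate step.

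Finally, $\|\rho^*_n\|_{L^\infty}\le K_n\to0$, and since each $\rho^*_n\not\equiv0$, infinitely many of the $\rho^*_n$ are distinct. This yields infinitely many nontrivial solutions.
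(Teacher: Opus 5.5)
Your limit passage (extracting $\varepsilon_k\to0$, weak-$*$ convergence of $\rho_{\varepsilon_k}$, $C^1(\overline\Omega)$ convergence of $\varphi_{\varepsilon_k}$, the weak form with $\chi\in C_c^\infty(\Omega)$) is sound and matches the paper, and so is your final distinctness argument. The gap is the step you yourself call the main obstacle: $\rho^*_n\not\equiv0$ is never proved. The $L^1$ lower bound is only announced. The subsolution is abandoned. The boundary-strip barrier is left unconstructed. Without this step the limit could be the trivial solution $(\phie,0)$ for every $n$, and the theorem does not follow.

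The missing idea is actually in your first attempt, which you discarded because of an error. Since $\theta$ does not vanish on $\gammac$, integrating by parts produces a boundary flux. Take $\theta\in H^2(\Omega)$ with $\theta\ge0$ on $\gammac$ and $\theta=0$ on $\gammad$ (e.g.\ $\theta=\phie$). The correct identity is
\[
\int_\Omega\bigl(\varepsilon\nabla\rho_\varepsilon+\rho_\varepsilon\nabla\varphi_\varepsilon\bigr)\cdot\nabla\theta\intd x
=\int_{\gammac}\Bigl(\varepsilon\frac{\partial\rho_\varepsilon}{\partial\normal}+K_n\frac{\partial\varphi_\varepsilon}{\partial\normal}\Bigr)\theta\intd s ,
\]
not $0$. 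With your constant data, $\rho_\varepsilon\in W^{2,p}(\Omega)$. Since $\rho_\varepsilon\le K_n$ in $\Omega$ with equality on $\gammac$, you get $\partial\rho_\varepsilon/\partial\normal\ge0$ on $\gammac$. Hence the diffusive flux can be dropped, and the right-hand side is $\ge K_n\int_{\gammac}(\partial\varphi_\varepsilon/\partial\normal)\,\theta\intd s$.

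On the left, $\varepsilon\int_\Omega\nabla\rho_\varepsilon\cdot\nabla\theta\intd x=-\varepsilon\int_\Omega\rho_\varepsilon\Delta\theta\intd x+\varepsilon\int_{\partial\Omega}\rho_\varepsilon\,\partial\theta/\partial\normal\intd s=O(\varepsilon)$. Also $\partial\varphi_{\varepsilon_k}/\partial\normal\to\partial\varphi^*_n/\partial\normal$ uniformly on $\gammac$ by your $C^1$ convergence. Passing to the limit gives $\int_\Omega\rho^*_n\nabla\varphi^*_n\cdot\nabla\theta\intd x\ge K_n\int_{\gammac}(\partial\varphi^*_n/\partial\normal)\,\theta\intd s$.

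If $\rho^*_n\equiv0$, then $\varphi^*_n=\phie$, so the left side vanishes. The right side is strictly positive because $\partial\phie/\partial\normal>0$ on $\gammac$ by Hopf's lemma. Note that with the paper's outward normal this derivative is positive, not negative as in your heuristics. This contradiction is exactly the paper's argument. The paper uses $\rhoc=\rhoch$, $\rhod=0$, but it only needs the maximum of $\rho_\varepsilon$ to be attained on $\gammac$, so your data $\rhoc=\rhod=K_n$ work equally well.
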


\begin{proof}
  Let us first show the existence of one nontrivial solution.
  For every positive integer $\ell$,
  let us denote by $(\phil, \rhol)$ a solution given by
  Theorem~\ref{thm : existence diff} for $\varepsilon = 1/\ell$,
  $\rhoc(x) = \rhoch$ where $\rhoch > 0$ is a fixed constant, and
  $\rhod \equiv 0$.  Note that $K_+ = \rhoch$.  Because
  $(\rhol)_\ell \subseteq W$, it is bounded in
  $L^2(\Omega)$, so, going if necessary to a subsequence, there exists
  $\rho \in L^2(\Omega)$ such that
  \begin{equation} \label{eq : weak cvg rho}
    \rhol \rightharpoonup \rho
    \quad \text{ weakly in } L^2(\Omega), \quad \text{ as } \ell \rightarrow +\infty.
  \end{equation}
  Note that $\rho \in W$.  Lemma~\ref{lemma:first-eq} implies that
  $\phil \rightharpoonup \varphi$ weakly in $W^{2,q}(\Omega)$
  for all $q \ge 2$.
  Therefore, for any $s\in \intervalco{0,2}$, $(\phil)_\ell$ converges strongly to
  $\varphi$ in $H^s(\Omega)$ whenever $\ell \to \infty$.  In
  particular, $\nabla \phil \to \nabla\varphi$ in $L^2(\Omega ; \mathbb{R}^2)$.
  As in the proof of Lemma \ref{lemma : G cont}, we can deduce that
  $\varphi$ is the solution to \eqref{sys:sys_phi}.
  
  Now, let us prove that
  $\Div(\rho \nabla \varphi)=0$.
  For any fixed $\chi \in L^2(\Omega)$, we have
  \begin{equation*}
    \bigl(\rhol \nabla \phil - \rho\nabla \varphi \bigm| \chi \bigr)_\Omega
    = \bigl(\rhol (\nabla \phil - \nabla \varphi) \bigm| \chi \bigr)_\Omega
    + \bigl((\rhol-\rho)\nabla \varphi \bigm| \chi \bigr)_\Omega.
  \end{equation*}
  The first term tends to $0$ because
  \begin{equation*}
    \bigl| (\rhol (\nabla \phil -\nabla \varphi)\lvert \chi)_\Omega \bigr|
    \le K_+ \|\nabla \phil-\nabla \varphi\|_\Omega \|\chi\|_\Omega
    \xrightarrow[\ell \to \infty]{} 0.
  \end{equation*}
  In addition,
  since
  $\nabla \varphi \in W^{1,q}(\Omega;\mathbb{R}^2) \hookrightarrow
  L^\infty(\Omega;\mathbb{R}^2)$, for $q>2$, we have
  $\chi \nabla \varphi \in L^2(\Omega; \IR^2)$.  From~\eqref{eq :
    weak cvg rho}, we deduce that the second term also tends to
  $0$. In other words
  \begin{equation}\label{eq : cvg weak rho phi}
    \rhol \nabla \phil \rightharpoonup \rho\nabla \varphi
    \hspace*{2ex} \text{ weakly in } L^2(\Omega;\mathbb{R}^2).
  \end{equation}

  Since $(\phil,\rhol)$ is a solution to \eqref{sys:sys_complet},
  we have for all $\chi \in \mathcal{D}(\Omega)$ \footnote{$\mathcal{D}(\Omega)$ is the set of smooth functions with a compact support included in $\Omega$. },
  \begin{equation*}
    0 =
    \bigl(\tfrac{1}{\ell} \nabla \rhol + \rhol \nabla \phil
    \bigm| \nabla \chi\bigr)_\Omega
    = -\tfrac{1}{\ell} \bigl(\rhol \bigm| \Delta \chi\bigr)_\Omega
    + \bigl(\rhol \nabla \phil \bigm| \nabla \chi\bigr)_\Omega.
  \end{equation*}
  Moreover, $\tfrac{1}{\ell} (\rhol \lvert \Delta \chi)_\Omega$ tends to $0$
  because $(\rhol)_\ell$ is bounded in $L^2(\Omega)$ and
  $\Delta \chi \in L^2(\Omega)$. So we deduce
  \begin{equation*}
    \bigl(\rhol \nabla \phil \bigm| \nabla \chi\bigr)_\Omega \to 0.
  \end{equation*}
  And so, by the uniqueness of the limit, and with \eqref{eq : cvg
    weak rho phi}, $(\rho \nabla \varphi \lvert \nabla \chi)_\Omega = 0$
  for every $\chi \in \mathcal{D}(\Omega)$. This implies
  $\Div(\rho \nabla \varphi) = 0$ in the sense of distributions.

  Consequently $(\varphi,\rho)$ is a solution of \eqref{sys:sys_sans_A}.

  In order to justify that the solution $(\varphi,\rho)$ is different
  from the trivial solution $(\phie,0)$ defined by \eqref{sys:elec}, it
  remains to show that $\rho \not \equiv 0$. Since
  $0\leq \rhol \leq K_+ = \rhoch$ a.e. in $\Omega$,
  $\partial \rhol /\partial\normal \ge 0$ on $\gammac$.

  Let us fix $\chi \in H^2(\Omega)$ such that $\chi \geq 0$ on
  $\gammac$ and $\chi \equiv 0$ on $\gammad$. So
  \begin{align*}
    0 &= \int_\Omega \Div \bigl(\tfrac{1}{\ell} \nabla \rhol
        + \rhol \nabla \phil \bigr) \chi \intd x\\
      & = - \int_\Omega
        \bigl(\tfrac{1}{\ell} \nabla \rhol + \rhol \nabla \phil\bigr)
        \nabla \chi \intd x
        + \int_{\gammac} \bigl(\tfrac{1}{\ell} \nabla \rhol \cdot \normal
        + \rhol \nabla \phil \cdot \normal\bigr) \chi \intd x \\
      & = - \int_\Omega \rhol \nabla \phil \nabla \chi \intd x
        + \frac{1}{\ell} \int_\Omega \rhol \Delta \chi \intd x
        - \frac{1}{\ell} \int_{\partial \Omega} \rhol
        \frac{\partial \chi}{\partial\normal} \intd x
        + \int_{\gammac} \Bigl(\frac{1}{\ell}
        \frac{\partial \rhol}{\partial\normal}
        + \rhoch \frac{\partial \phil}{\partial\normal}\Bigr) \chi \intd x.
  \end{align*}
  Using ${\partial \rhol}/{\partial\normal} \ge 0$ on $\gammac$, this
  implies
  \begin{equation}
    \label{eq : int rho}
    \int_\Omega \rhol \nabla \phil \nabla \chi \intd x
    \ge \frac{1}{\ell} \int_\Omega \rhol \Delta \chi \intd x
    - \frac{1}{\ell} \int_{\partial\Omega} \rhol
    \frac{\partial \chi}{\partial\normal} \intd x
    +  \int_{\gammac} \rhoch \frac{\partial \phil}{\partial\normal}
    \chi \intd x.
  \end{equation}
  We want to pass to the limit $\ell\to\infty$ in this inequality.
  First, since
  $\int_\Omega \rhol \Delta \chi \intd x$ tends to
  $\int_\Omega \rho \Delta \chi \intd x$, one has
  \begin{equation*}
    \frac{1}{\ell} \int_\Omega \rhol \Delta \chi \intd x
    \xrightarrow[\ell\to\infty]{} 0.
  \end{equation*}

  Secondly, since
  $$\int_{\partial \Omega} \rhol \frac{\partial
    \chi}{\partial\normal} \intd x = \int_{\gammac} \rhoch \frac{\partial
    \chi}{\partial\normal} \intd x + \int_{\gammad} \rhod \frac{\partial
    \chi}{\partial\normal} \intd x$$ is a constant, this implies that
  \begin{equation*}
    \frac{1}{\ell}
    \int_{\partial \Omega} \rhol \frac{\partial \chi}{\partial\normal} \intd x
    \xrightarrow[\ell\to\infty]{} 0.
  \end{equation*}
  Moreover, with \eqref{eq : cvg weak rho phi} and since
  $\partial \phil/\partial\normal$ tends to $\partial \varphi/\partial\normal$
  strongly in $L^2(\gammac)$, we can pass to the limit in \eqref{eq : int rho} and we obtain:
  \begin{equation*}
    \int_\Omega \rho \nabla \varphi \nabla \chi \intd x
    \geq \int_{\gammac} \rhoch \frac{\partial \varphi}{\partial\normal} \chi \intd x.
  \end{equation*}
  This implies that $\rho \not \equiv 0$. Indeed, if $\rho \equiv 0$,
  then $\varphi = \phie$ and so we will have
  \begin{equation*}
    0 \geq \int_{\gammac} \rhoch \frac{\partial \phie}{\partial\normal} \chi \intd x.
  \end{equation*}
  But $\rhoch >0$ and $\partial \phie / \partial\normal \geq 0$ is non
  identically zero (thanks to the strong maximum principle),
  choosing $\chi >0$ on $\gammac$ yields the contradiction
  \begin{equation*}
    0 \ge \int_{\gammac} \rhoch \frac{\partial \phie}{\partial\normal}
    \chi \intd x >0.
  \end{equation*}
  So $\rho \not \equiv 0$ and we have proved the
  existence of a nontrivial solution $(\varphi,\rho)$ to the
  system~\eqref{sys:sys_sans_A}.

  To conclude, let us show the existence of infinitely many solutions.
  Since $\rho \not \equiv 0$, there exists a positive constant
  $K_0 \le \rhoch$ such that $\|\rho\|_{L^\infty(\Omega)} = K_0$. If
  we repeat the above argument with $\rho = K_0/2$ on $\gammac$ and
  $\rho=0$ on $\gammad$, we obtain the existence of a nontrivial
  solution $(\varphi_1,\rho_1)$ to Problem~\eqref{sys:sys_sans_A}.
  The bound $\|\rho_1\|_{L^\infty(\Omega)} \le K_0/2$ implies that
  $\rho_1 \ne \rho$.  We conclude by iterating the above procedure.
\end{proof}

\section{Properties on the normal derivative of the solution \label{s:properties}}

In this section, we establish bounds on the normal derivative of
solutions to~\eqref{sys:gen} in term of the normal derivatives of the
electrostatic solution $\phie$.  We also provide quantitative
estimates on the latter in the special case when $\gammac$ is a
circle.

Let us start with a theorem that provides necessary and sufficient conditions on
$A = \partial \varphi/\partial\normal |_{\gammac}$ for a solution of
Poisson's equation with a nonnegative right-hand-side to exist.

\begin{theorem}\label{thm : deriv_normal}
  Let $\Omega$ be an open subset of $\IR^2$ satisfying the assumptions of the introduction and
  $\varphi \in H^1(\Omega)$ be the unique solution of
  \begin{equation}\label{sys:phi_laplace}
    \begin{cases}
      -\Delta \varphi = \rho, &  \text{in } \Omega, \\
      \varphi = 0, &  \text{on } \gammad, \\
      \varphi =1, &  \text{on } \gammac,
    \end{cases}
  \end{equation}
  where
  $\rho \in L^p(\Omega)$, with $p>2$ and $\rho \geq 0$. Let
  $\phie \in H^1(\Omega)$ be the solution of \eqref{sys:elec}. Then we have 
  \begin{equation*}
    \frac{\partial \phie}{\partial\normal} > 0, 
    \text{ on } \gammac \qquad \text{and} \qquad   \frac{\partial \phie}{\partial\normal} < 0,
    \text{ on } \gammad \setminus \mathcal{V},
  \end{equation*}
  where $\mathcal{V}$ is the set of corners of $\gammad$.
   Moreover, the three following properties are equivalent:
  \begin{align}
    \rho &\not \equiv 0, \label{eq: rho equiv 0}\\
    \frac{\partial \varphi}{\partial\normal}
    &< \frac{\partial \phie}{\partial\normal},
    \quad \text{on } \gammac,
    \label{eq : ineg_deriv_normal}\\[1\jot]
    \frac{\partial \varphi}{\partial\normal}
    &< \frac{\partial \phie}{\partial\normal},
    \quad \text{on } \gammad \setminus \mathcal{V}.
    \label{eq : ineg_deriv_normal_2}
  \end{align}
  
\end{theorem}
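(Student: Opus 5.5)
The plan is to apply the Hopf boundary point lemma twice: once to $\phie$ itself and once to the difference $w \coloneq \phie - \varphi$. The outer normal $\normal$ points out of $\Omega$, so on $\gammac$ it points into the conductor.

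\emph{Sign of $\partial\phie/\partial\normal$.} By the weak maximum principle, $0 \le \phie \le 1$ in $\Omega$. Since $\Omega$ is connected and $\phie$ is nonconstant (its boundary values are $1$ and $0$), the strong maximum principle gives $0<\phie<1$ in $\Omega$. Lemma~\ref{lemma:first-eq} with $\rho=0$ gives $\phie\in W^{2,q}(\Omega)\subset \mathcal C^1(\overline\Omega)$, so normal derivatives are classical.

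On $\gammac$, which is $\mathcal C^{1,1}$ and hence satisfies an interior ball condition, $\phie$ attains its maximum $1$. Hopf's lemma \cite[Lemma~3.4]{gilbarg2001elliptic} then yields $\partial\phie/\partial\normal>0$.

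On $\gammad\setminus\mathcal V$, the function $\phie$ attains its minimum $0$. Each $\gammadi$ is $\mathcal C^{1,1}$, so every non-corner point admits a small interior ball contained in $\Omega$. Hopf's lemma applied to $-\phie$ gives $\partial\phie/\partial\normal<0$ there.

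\emph{Equivalences.} Set $w=\phie-\varphi$. It satisfies $-\Delta w=-\rho\le 0$ in $\Omega$ and $w=0$ on $\partial\Omega$. By Lemma~\ref{lemma:first-eq}-type regularity, with $\rho\in L^p$ and $p>2$ so that $W^{2,p}\hookrightarrow\mathcal C^1(\overline\Omega)$, we have $w\in W^{2,p}(\Omega)\subset\mathcal C^1(\overline\Omega)$.

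$w$ is subharmonic in the weak/strong sense, since $\Delta w=\rho\ge0$. The weak maximum principle for $W^{2,p}$ functions \cite[Theorem~9.1]{gilbarg2001elliptic} gives $w\le 0$ in $\Omega$.

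If $\rho\not\equiv0$, then $w\not\equiv 0$, since otherwise $\Delta w = \rho = 0$. The strong maximum principle for strong solutions \cite[Theorem~9.6]{gilbarg2001elliptic} then gives $w<0$ in $\Omega$. Hopf's lemma for $W^{2,p}$ strong subsolutions attaining their maximum $0$ on the boundary, at points satisfying an interior ball condition (which holds everywhere on $\gammac$ and on $\gammad\setminus\mathcal V$), yields $\partial w/\partial\normal>0$ there. This is exactly \eqref{eq : ineg_deriv_normal} and \eqref{eq : ineg_deriv_normal_2}.

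Conversely, suppose $\rho\equiv0$. Then $\varphi=\phie$ by uniqueness, so the normal derivatives coincide everywhere and neither strict inequality can hold. Hence each of \eqref{eq : ineg_deriv_normal} and \eqref{eq : ineg_deriv_normal_2} implies \eqref{eq: rho equiv 0}, which closes the cycle of equivalences.

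\emph{Main obstacle.} The delicate point is justifying Hopf's lemma for $w$ when $\Delta w=\rho$ is only in $L^p$, not continuous. I would handle this with the strong-solution version in \cite[Chapter~9]{gilbarg2001elliptic}: the Hopf barrier argument carries over using the Aleksandrov–Bakelman–Pucci maximum principle, since $w\in W^{2,p}_{\mathrm{loc}}$ with $p>2 = n$.

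If that citation is unwelcome, there is a fallback. Choose an interior ball $B\subset\Omega$ touching the boundary point $x_0$ and an annular region $B\setminus \overline{B'}$ with $\overline{B'}\subset B$ a smaller concentric ball. On $\partial B'$ one has $w\le -\delta<0$ by compactness and continuity. Compare $w$ with $-c\,v$, where $v$ is the classical Hopf barrier. The comparison uses only the weak maximum principle applied to $w + c v$, which is a $W^{2,p}$ subsolution because $\Delta(w+cv)\ge \rho\ge0$. This gives $\partial w/\partial\normal(x_0)\ge c\,\partial v/\partial\normal(x_0) > 0$, where $\partial v/\partial\normal(x_0)>0$ by the explicit construction of $v$.
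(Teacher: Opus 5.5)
Your argument is the paper's: Hopf's lemma for $\phie$ on $\gammac$ and $\gammad\setminus\mathcal{V}$, then the strong maximum principle and Hopf's lemma for the difference of $\varphi$ and $\phie$ (the paper uses $d=\varphi-\phie=-w$), and the converse from $\varphi=\phie$ when $\rho\equiv0$. You also justify Hopf's lemma for a $W^{2,p}$ subsolution, whereas the paper simply cites the $C^2$ version; one sign slip in your fallback, though: with the classical barrier ($v>0$ in $B$, $v=0$ on $\partial B$, $\Delta v\ge0$) one has $\partial v/\partial\normal(x_0)<0$, so $w+cv\le0$ with equality at $x_0$ yields $\partial w/\partial\normal(x_0)\ge -c\,\partial v/\partial\normal(x_0)>0$.
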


\begin{proof}
  First, by using the strong maximum principle and the Hopf boundary point lemma to $\phie$ \cite[Theorem 2.2 \& Lemma 3.4]{gilbarg2001elliptic} (see also \cite[Theorem 3.27, Lemma 3.26]{tro}), we
  obtain that $0<\phie< 1$ and $\partial \phie/\partial\normal < 0$
  on $\gammad \setminus \mathcal{V}$ and $\partial \phie/\partial\normal> 0$ on $\gammac$.

  First, assume that \eqref{eq: rho equiv 0} and prove \eqref{eq : ineg_deriv_normal} and \eqref{eq : ineg_deriv_normal_2}.
  Let us consider  $d \coloneq \varphi-\phie$ which solves
  \begin{equation*}
    \begin{cases}
      - \Delta d = \rho, & \text{in } \Omega, \\
      d = 0, &  \text{on } \partial \Omega.
    \end{cases}
  \end{equation*}
  As $d$ cannot be $0$, thanks to the strong maximum principle, we have
        \begin{equation*}
    d>0 \text{ in } \Omega.
  \end{equation*}
    Thus by the Hopf boundary point lemma, we have $\partial d / \partial\normal <0$ on $\partial \Omega \setminus \mathcal{V}$, which establishes \eqref{eq : ineg_deriv_normal} and \eqref{eq : ineg_deriv_normal_2}.
  
  Now let us prove that \eqref{eq : ineg_deriv_normal} (resp.~\eqref{eq : ineg_deriv_normal_2}) implies \eqref{eq: rho equiv 0}. Assume on the contrary that $\rho \equiv 0$. Then, $\varphi = \phie$ contradicting \eqref{eq : ineg_deriv_normal} (resp.~\eqref{eq : ineg_deriv_normal_2}).
\end{proof}

Let us now turn to the quantitative estimates on
$\partial\phie/\partial\normal$.

\begin{theorem}
  \label{thm : phie_1_2}
  Assume that $\gammac=\partial B(0,r_0) \subseteq \IR^2$, for some
  $r_0>0$ and let $\phie$ be the solution of \eqref{sys:elec}. Then
  \begin{equation}\label{ineq : phie_radial}
    \frac{1}{r_0 \ln(r_2/r_0)}
    \le \frac{\partial \phie}{\partial\normal}\biggr|_{\gammac}
    \le \frac{1}{r_0 \ln(r_1/r_0)},
  \end{equation}
  where $r_1 = \min_{x \in \gammad} |x|$ and
  $r_2 = \max_{x \in \gammad} |x|$.
\end{theorem}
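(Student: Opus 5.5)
We compare $\phie$ with explicit radial harmonic functions on annuli and apply the weak maximum principle. Since $\gammac=\partial B(0,r_0)$ and $\Omega$ lies outside the conductor and inside the region bounded by $\gammad$, we have $\Omega \subseteq \{r_0<|x|<r_2\}$. Also, $\Omega$ contains the annulus $\{r_0<|x|<r_1\}$: points there lie outside the conductor and inside the outer boundary, because $\gammad$ does not meet $B(0,r_1)$ and $\Omega$ is connected with $\partial\Omega=\gammac\cup\gammad$. For $R>r_0$ set
\begin{equation*}
  u_R(x) \coloneq \frac{\ln(R/|x|)}{\ln(R/r_0)},
\end{equation*}
which is harmonic on $\IR^2\setminus\{0\}$, equals $1$ on $\gammac$ and $0$ on $|x|=R$. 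Since the normal $\normal$ on $\gammac$ points outward from $\Omega$, i.e.\ toward the origin, we have $\normal=-x/|x|$ there, and
\begin{equation*}
  \frac{\partial u_R}{\partial\normal}\Bigr|_{\gammac} = -\partial_r u_R(r_0) = \frac{1}{r_0\ln(R/r_0)}.
\end{equation*}

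\textbf{Lower bound.} Compare $\phie$ with $u_{r_2}$ on $\Omega$. On $\gammac$ both functions equal $1$. On $\gammad$ we have $\phie=0$, while $|x|\le r_2$ gives $u_{r_2}\ge 0$. So $w\coloneq\phie-u_{r_2}$ is harmonic in $\Omega$ with $w\le 0$ on $\partial\Omega$, and the maximum principle gives $w\le 0$ in $\Omega$. Since $w=0$ on $\gammac$, $w$ attains its maximum there, so $\partial w/\partial\normal\ge 0$ on $\gammac$. Hence $\partial\phie/\partial\normal\ge \partial u_{r_2}/\partial\normal = 1/(r_0\ln(r_2/r_0))$. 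This one-sided normal derivative is legitimate because $\phie\in W^{2,q}(\Omega)\subset \mathcal C^1(\overline\Omega)$ by Lemma~\ref{lemma:first-eq} with $\rho=0$.

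\textbf{Upper bound.} Compare $\phie$ with $u_{r_1}$ on the annulus $\Omega_1\coloneq\{r_0<|x|<r_1\}\subseteq\Omega$. On $|x|=r_0$ both equal $1$. On $|x|=r_1$ we have $u_{r_1}=0$, while $\phie\ge0$ there. This holds because the circle $|x|=r_1$ lies in $\overline\Omega$ and $0\le\phie\le1$ on $\overline\Omega$, by the maximum principle or by Theorem~\ref{thm : deriv_normal}, which gives $0<\phie<1$ in $\Omega$ and $\phie=0$ at the points where the circle touches $\gammad$. Thus $\phie-u_{r_1}\ge0$ on $\partial\Omega_1$, hence in $\Omega_1$, and it vanishes on $\gammac$, so its minimum is attained there and $\partial(\phie-u_{r_1})/\partial\normal\le 0$ on $\gammac$. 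This yields $\partial\phie/\partial\normal\le 1/(r_0\ln(r_1/r_0))$.

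The only delicate point is the geometric claim that the annulus $\{r_0<|x|<r_1\}$ lies in $\overline\Omega$ and $\Omega$ lies in $B(0,r_2)$. I would justify it from the topology: $\gammad$ is the outer boundary, and the closed circle $\gammac$ together with the connected set $\Omega$ forces the picture of Figure~\ref{fig:omega}. Once that is settled, the rest is the standard comparison and sign argument for normal derivatives at a boundary extremum.
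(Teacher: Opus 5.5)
Your proof is correct and follows the paper's argument: the same logarithmic barriers $\ln(R/|x|)/\ln(R/r_0)$ with $R=r_1,r_2$, the weak maximum principle, and the sign of the outward normal derivative at a boundary extremum. The only difference is in the upper bound: the paper compares on all of $\Omega$, using that the $r_1$-barrier is $\le 0$ on $\gammad$ because $|x|\ge r_1$ there, so it needs neither the inclusion $\{r_0<|x|<r_1\}\subseteq\Omega$ nor $\phie\ge 0$ on the circle $|x|=r_1$, both of which your restriction to $\Omega_1$ requires.
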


Note that $r_2 \ge r_1 > r_0$ are such that
$B(0,r_1) \subseteq \Omega \subseteq B(0,r_2)$, see
Figure~\ref{fig : domaine derivée} for an illustration.

\begin{figure}[ht]
  \centering
  \begin{tikzpicture}[baseline = 1ex, scale = 0.9]
    \path
    \foreach \X/\Y in {1/0.9,2/0.2,3/0.45,4/0.3,5/0.9,6/0.7,7/0.5,8/0.1,
      9/0.45,10/0.15,11/0.7,12/0.8,13/0.4}{
      (360*\X/14:2+\Y) coordinate (A\X)
    };
    \draw[fill=black!10] plot[smooth cycle,samples at={1,...,13}] (A\x);
    \draw[fill=white] (0,0) circle (0.7);
    \draw[draw=red] (0,0) circle (2.1);
    \draw[draw=red, <->] (0,0) -- (-1.9,-0.85) node[red,pos=0.65, sloped, above]{$r=r_1$};
    \draw[<->] (0,0) -- (0.7,0) node[right]{$r=r_0$};
    \node at (0,1) {$\gammac$};
    \node at (-0.3,2.6) {$\gammad$};
    \node at (-0.8,1.5) {$\Omega$};
    \draw[->, thick] (0.3,0.65) -- (0.1,0.2);
    \node[left] at (0.2,0.4) {$\normal$};
    \draw[draw=darkgreen] (0,0) circle (2.9);
    \draw[draw=darkgreen,<->] (0,0) -- (1.8,-2.3) node[darkgreen,pos=0.48, sloped, above]{$r=r_2$};
  \end{tikzpicture}
  \caption{A domain with a circular conductor with the inscribed
      and circumscribed disks.}
  \label{fig : domaine derivée}
\end{figure}

\begin{proof}
  For $i \in \{1, 2\}$, let $\phie^{(i)}$ be given by
  \begin{equation} \label{eq : phiei def}
    \phie^{(i)}(x) \coloneq \frac{\ln(r/r_i)}{\ln(r_0/r_i)}.
  \end{equation}
  We readily check that $\phie^{(i)}$ is the unique solution of
  \begin{equation} \label{sys:phie1}
    \begin{cases}
      \Delta \phie^{(i)} = 0,
      & \text{in }   B(0,r_i)\backslash B(0,r_0), \\
      \phie^{(i)} = 1, & \text{on } \gammac, \\
      \phie^{(i)} = 0, & \text{on } \partial B(0,r_i).
    \end{cases}
  \end{equation}

  Let us start with the right inequality of \eqref{ineq : phie_radial}. Thanks to \eqref{eq : phiei
    def}, we directly see that $\phie^{(1)} \leq 0$ on $\gammad$.
  Then $d^{(1)} \coloneq \phie- \phie^{(1)}$ is a solution to
  \begin{equation*}
    \begin{cases}
      \Delta d^{(1)} = 0, &   \text{in } \Omega , \\
      d^{(1)} = 0, &  \text{on } \gammac, \\
      d^{(1)} \ge 0, & \text{on } \gammad,
    \end{cases}
  \end{equation*}
  and, thanks to the maximum principle
  \cite[Theorem~3.1]{gilbarg2001elliptic}, $d^{(1)}\geq 0$
  in $\Omega$.  Therefore
  \begin{align*}
    \frac{\partial d^{(1)}}{\partial\normal}\biggr|_{\gammac} \le 0
    \qquad\text{i.e.}\qquad
    \frac{\partial \phie}{\partial\normal}\biggr|_{\gammac}
    \le \frac{\partial \phie^{(1)}}{\partial\normal}\biggr|_{\gammac}
    = \frac{-1}{r_0 \ln(r_0/r_1)}.
  \end{align*}
  The same argument implies the left inequality of \eqref{ineq : phie_radial} because
  \eqref{eq : phiei def} yields $\phie^{(2)} \ge 0$ on~$\gammad$.
\end{proof}

\begin{remark}
  Assuming that $\gammac=\partial B(0,r_0)$, for some $r_0>0$,
  Theorems \ref{thm : deriv_normal} and \ref{thm : phie_1_2} allows us
  to draw the following two conclusions.

  If
  \begin{equation*}
    A = \dfrac{\partial \varphi}{\partial\normal}
    > \frac{1}{r_0 \ln(r_1/r_0)}
    \text{ on } \gammac,
  \end{equation*}
  then \eqref{sys:phi_laplace} has no solution and neither has
  Problem~\eqref{sys:gen}--\eqref{sys:BC}.

   If
   \begin{equation*}
     A = \dfrac{\partial \varphi}{\partial\normal}
     \le \frac{1}{r_0 \ln(r_2/r_0)}
          \text{ on } \gammac,  
   \end{equation*}
   then a solution to Problem~\eqref{sys:gen}--\eqref{sys:BC}
   \emph{may} exist.
\end{remark}

\section{Analytical solutions for the unipolar radial case}
\label{s:radial}

In this section, we consider a simpler domain $\Omega$. More precisely
$\Omega$ is supposed to be an annulus (see Figure~\ref{fig : radial}) and
is given by
$\Omega \coloneq \bigl\{ (r \cos\theta, r\sin\theta)\in \IR^2
\bigm| r_0<r<1,\linebreak[2]\,
0\leq \theta\leq 2 \pi \bigr\}$ where $0<r_0<1$.
\begin{figure}[h!t]
  \centering
  \begin{tikzpicture}[baseline = 1ex, scale = 1.2]
    \draw[fill=black!10] (0,0) circle (1.5);
    \draw[fill=white] (0,0) circle (0.7);
    \draw[->] (-2,0) -- (2,0);
    \draw[->] (0,-2) -- (0,2);
    \draw[<->] (-0.02, 0.02) -- (-1.06, 1.06)
    node[pos=0.7, sloped, above] {$r=1$};
    \draw[<->] (-0.02, -0.02) -- (-0.49,-0.49)
     node[pos=0.4, left]{$r_0$};
    \draw[thick, fill] (0,0) -- (0.8, 0.8) circle(1pt)
    node at (0.6, 0.75) {$r$};
    \draw[thick,->] (0.9,0) arc (0:45:0.9) node at (1,0.5) {$\theta$};
    \node at (0.8, -0.7){$\Omega$};
  \end{tikzpicture}
  \caption{Definition domain in the radial case.}
  \label{fig : radial}
\end{figure}

Here $\gammac = \partial B(0,r_0)$ and $\gammad = \partial B(0,1)$.
Our aim is to compute all the radial solutions to
\eqref{sys:gen}--\eqref{sys:BC}, that is all radial $(\varphi, \rho)$
satisfying
\begin{subnumcases}{}
  -\Delta \varphi(r) = \rho(r), \label{eq : laplace_radial}\\
  \Div \bigl(\rho(r) \nabla\varphi(r)\bigr) = 0,\label{eq : div_radial}\\
  \varphi(r=r_0) = 1, \label{eq : cb_r_0}\\
  \varphi(r=1) = 0, \label{eq : cb_r_1}\\[2\jot]
  \displaystyle \frac{\partial \varphi}{\partial r}(r=r_0)
  = -A, \label{eq : cb_neumann_radial}
\end{subnumcases}
where $A$ is a strictly positive constant.
These solutions are given by the next theorem.

\begin{theorem}
  \label{thm : sol_radial}
  All but one radial solutions $(\varphi,\rho)$ of the system
  \eqref{eq : laplace_radial}--\eqref{eq : cb_r_1} are given by:
  \begin{equation} \label{eq : sol_radial_phi}
    \varphi_\lambda(r)
    = \frac{F_\lambda(1)-F_\lambda(r)}{F_\lambda(1)-F_\lambda(r_0)},
  \end{equation}
  and
  \begin{equation}\label{eq : sol_radial_rho}
    \rho_\lambda(r) = \begin{cases}
      \displaystyle \frac{\lambda}{(F_\lambda(1)-F_\lambda(r_0))
      \sqrt{1+\lambda r^2}} & \text{if } \lambda \geq -1,\\[4\jot]

      \displaystyle\frac{-\lambda}{(F_\lambda(1)-F_\lambda(r_0))
      \sqrt{-1-\lambda r^2}} & \text{if } \lambda
                               \leq \displaystyle \frac{-1}{r_0^2}<-1,\\[4\jot]
    \end{cases}
  \end{equation}
  for $r_0 \leq r \leq 1$,
  where $\lambda$ is a real parameter varying in
  $\Bigintervaloc{-\infty, \displaystyle \frac{-1}{r_0^2}}\cup
    \intervalco{-1, + \infty}$. The function
  $F_\lambda: \intervalcc{r_0, 1} \to \IR$ is
  increasing and defined as
  follows:
  \begin{equation}
    \label{eq:def-F}
    F_{\lambda}(r) =
    \begin{cases}
      \sqrt{1+\lambda r^2} - \ln(\sqrt{1+\lambda r^2}+1) + \ln(r)
      & \text{if } \lambda \geq -1,\\[1\jot]
      \sqrt{-\lambda r^2 -1} - \arctan(\sqrt{-\lambda r^2-1})
      & \text{if } \lambda\leq \displaystyle \frac{-1}{r_0^2}. \\[2\jot]
    \end{cases}
  \end{equation}
  The Neumann condition \eqref{eq : cb_neumann_radial} is satisfied
  for the following value of $A$:
  \begin{equation}\label{lemma : A_radial}
        A_\lambda
    \coloneq
    \frac{\sqrt{\strut \abs{\lambda + r_0^{-2}}}}{
      F_\lambda(1) -F_\lambda(r_0)}.
  \end{equation}
  The remaining radial solution is:
  \begin{equation*}
    \varphi_\infty(r) \coloneq \frac{1-r}{1-r_0},\quad
    \rho_\infty(r) \coloneq \frac{1}{r(1-r_0)}, \quad
    \text{with}\quad
    A_\infty \coloneq \frac{1}{1-r_0}.
  \end{equation*}

  Finally, for each $A \in \intervalcc{0, A_{-1}}$, there is a unique
  solution $(\varphi, \rho)$ to \eqref{eq :
    laplace_radial}--\eqref{eq : cb_neumann_radial} and the maps
  \begin{align*}
    &\intervalcc{0, A_{-1}} \to H^1\bigl(\intervaloo{r_0,1}\bigr):
    A \mapsto \varphi_\lambda
    \text{ with } A_\lambda = A,\\
    &\intervaloo{0, A_{-1}} \to L^2\bigl(\intervaloo{r_0,1}\bigr):
    A \mapsto \rho_\lambda
    \text{ with } A_\lambda = A
  \end{align*}
  are continuous.
\end{theorem}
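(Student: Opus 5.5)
The plan is to reduce the system to ODEs in $r$ and integrate them explicitly. For radial functions, \eqref{eq : laplace_radial} reads $-(r\varphi')'=r\rho$ and \eqref{eq : div_radial} reads $(r\rho\varphi')'=0$. The second equation gives $r\rho\varphi'=c$ for some constant $c$. Set $u\coloneq r\varphi'$. Wherever $u\neq0$ we have $\rho=c/u$, and the first equation becomes $u'=-rc/u$. Hence $(u^2)'=-2cr$, so $u^2=k-cr^2$ for constants $k,c$ with $k-cr^2\ge0$ on $[r_0,1]$.

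Next I fix the sign of $u$. Since $u$ is continuous and $u^2$ vanishes at most at one point (an endpoint in the admissible cases), $u$ has a constant sign on $\intervaloo{r_0,1}$. Because $\varphi(r_0)=1>0=\varphi(1)$, that sign is negative. The case $u\equiv0$ is excluded by the boundary data, and if $u$ vanishes on a set of positive measure then $c=0$.

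I then split into cases according to $(k,c)$.
\begin{itemize}
\item If $k>0$, put $\lambda\coloneq-c/k$. Then $u=-\sqrt k\sqrt{1+\lambda r^2}$, and positivity on $[r_0,1]$ forces $\lambda\ge-1$.
\item If $k<0$, put $\lambda\coloneq-c/k$ again. Then $u=-\sqrt{\abs k}\sqrt{-1-\lambda r^2}$, and positivity forces $\lambda\le-1/r_0^2$.
\item If $k=0$, then $u=-\sqrt{-c}\,r$. This gives the linear solution $\varphi_\infty$, corresponding to the limit $\lambda\to\pm\infty$.
\end{itemize}

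In each case $\varphi'=u/r=-\sqrt{\abs k}\,F_\lambda'(r)$. To confirm this, I differentiate \eqref{eq:def-F}. With $s=\sqrt{1+\lambda r^2}$ one gets $F_\lambda'(r)=\frac{\lambda r}{s+1}+\frac1r=\frac{s}{r}>0$, and similarly $F_\lambda'(r)=\sqrt{-1-\lambda r^2}/r$ in the other branch. This also shows that $F_\lambda$ is increasing. Imposing \eqref{eq : cb_r_0}--\eqref{eq : cb_r_1} fixes $\sqrt{\abs k}=1/(F_\lambda(1)-F_\lambda(r_0))$ and yields \eqref{eq : sol_radial_phi}. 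From $\rho=c/u=-\lambda k/u$ I recover \eqref{eq : sol_radial_rho}. Evaluating $A=-\varphi'(r_0)=\sqrt{\abs k}\,\sqrt{\abs{1+\lambda r_0^2}}/r_0$ gives \eqref{lemma : A_radial}.

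For the last claim, I order the parameter set along one continuous curve:
\[
\lambda=-1/r_0^2 \;\to\; -\infty \;\equiv\; +\infty \;\to\; -1,
\]
with the point $\lambda=\pm\infty$ corresponding to $(\varphi_\infty,\rho_\infty,A_\infty)$. The endpoint $\lambda=-1/r_0^2$ gives $A=0$. I will check that $A_\lambda\to A_\infty$ as $\lambda\to\pm\infty$ by expanding $F_\lambda(1)-F_\lambda(r_0)\sim\sqrt{\abs\lambda}\,(1-r_0)$. I will also check that $A_{-1}$ is the value at the other end. Existence and uniqueness for each $A\in\intervalcc{0,A_{-1}}$ then follow from showing that $\lambda\mapsto A_\lambda$ is continuous and strictly monotone along this curve.

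The main obstacle is proving this strict monotonicity. My first attempt would be a comparison argument rather than direct differentiation of \eqref{lemma : A_radial}. Along the curve, $c$ changes sign monotonically, so $\rho$ changes monotonically. I would then apply the maximum-principle reasoning of Theorem~\ref{thm : deriv_normal} to the difference of two solutions, with Dirichlet data zero, to compare normal derivatives at $r_0$. If that fails, I would differentiate $\log A_\lambda$ in $\lambda$ directly on each branch, reducing its sign to an elementary inequality for $\sqrt{1+\lambda r^2}$ (respectively $\arctan$). Continuity of $A\mapsto\varphi_\lambda$ in $H^1$ and of $A\mapsto\rho_\lambda$ in $L^2$ on the open interval then follows from continuity in $\lambda$ of the explicit formulas, including at $\lambda=\pm\infty$, combined with continuity of the inverse of a strictly monotone continuous map. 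The endpoints are excluded for $\rho$ because the factor $1/\sqrt{-1-\lambda r^2}$ blows up at $r=r_0$ as $\lambda\to-1/r_0^2$.
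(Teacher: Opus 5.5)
\textbf{Verdict: genuine gap, in the strict monotonicity of $\lambda\mapsto A_\lambda$.} The rest of your derivation is correct and is essentially the paper's computation with $k=\longtilde{K}$, $c=K$. That part covers the first integral $r\rho\varphi'=c$, the identity $(u^2)'=-2cr$, the cases $k>0$, $k<0$, $k=0$, the formula $F_\lambda'(r)=\sqrt{\abs{1+\lambda r^2}}/r$, the normalisation, and the formulas for $\rho_\lambda$ and $A_\lambda$. The gap sits in the step you flag yourself: strict monotonicity of $\lambda\mapsto A_\lambda$ on each branch. Uniqueness for each $A\in\intervalcc{0,A_{-1}}$ and continuity of the inverse map both depend on it, and neither of your two routes establishes it.

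\textbf{Your first route does not work as stated.} It rests on ``$c$ changes sign monotonically, so $\rho$ changes monotonically'', and this does not follow. You have $\rho_\lambda(r)=-c/\sqrt{k-cr^2}$, and $k$ is not free: it is fixed by $\int_{r_0}^1\abs{u}/r\intd r=1$. That constraint forces the profiles $\abs{u}$ of any two distinct solutions to cross in $\intervaloo{r_0,1}$. On the side of the crossing where the solution with smaller $\abs{c}$ also has smaller $\abs{u}$, the quotient $\abs{c}/\abs{u}$ can be ordered either way. So neither the sign nor the monotonicity of $c$ (which you also do not prove) yields the pointwise ordering of the $\rho_\lambda$ that the maximum-principle comparison needs. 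That ordering is in fact true, but proving it requires differentiating $\rho_\lambda(r)$ in $\lambda$ and a weighted-average argument, which is more work than the direct route. A smaller point: Theorem~\ref{thm : deriv_normal} assumes $\rho\in L^p$ with $p>2$, which fails at $\lambda=-1/r_0^2$.

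\textbf{Your fallback is not specified.} You do not say which inequality you would reduce to. Differentiating the closed forms of $F_\lambda$, with their $\ln$ and $\arctan$ terms, leads to a transcendental inequality rather than an elementary one.

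\textbf{The missing idea:} keep $F_\lambda(1)-F_\lambda(r_0)=\int_{r_0}^1\sqrt{\abs{1+\lambda s^2}}/s\intd s$ in integral form, which you already have from $F_\lambda'$, and normalise by $\sqrt{\abs{1+\lambda r_0^2}}$. On both branches this gives
\[
\frac{1}{A_\lambda}=\int_{r_0}^1\frac{r_0}{s}\sqrt{\frac{1+\lambda s^2}{1+\lambda r_0^2}}\intd s,
\qquad
\partial_\lambda\frac{1+\lambda s^2}{1+\lambda r_0^2}=\frac{s^2-r_0^2}{(1+\lambda r_0^2)^2}>0\quad\text{for } s>r_0 .
\]
Hence $1/A_\lambda$ is strictly increasing and $A_\lambda$ strictly decreasing on each branch. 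Combine this with your limits $A_\lambda\to A_\infty$ as $\lambda\to\pm\infty$ and the endpoint values $0$ and $A_{-1}$. This gives the bijection of the glued parameter curve onto $\intervalcc{0,A_{-1}}$, and the rest of your continuity argument then goes through.
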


In the previous Theorem, the electrostatic solution corresponds to
$\lambda=0$ and is given by:
\begin{equation}\label{eq : phie_radiale}
  \varphi_0(r) = \phie(r) = \frac{\ln(r)}{\ln(r_0)}.
\end{equation}
In that case, the Neumann condition holds for $A$ being
$A_0 = {-1}/({r_0 \ln(r_0)})$.

In Figure \ref{fig : cond_neumann}, the graph of the function
$\lambda \mapsto A_\lambda$ is drawn which allows to visualize the
relationship between $\lambda$ and $A$.

\begin{figure}[h!t]
  \centering
  \newcommand{\lmin}{-13}
  \begin{tikzpicture}[x=5mm]
    \draw[->] (\lmin,0) -- (10,0) node[below]{$\lambda$};
    \draw[->] (\lmin, 0) -- ++(0, 5) node[left]{$A$};
    \draw[thick] plot file {\data{A_lambda1.dat}};
    \draw[thick] plot file {\data{A_lambda2.dat}};
    \node at (1.7, 2.8) {$A_\lambda$};
    \draw (\lmin,2) +(-3pt, 0) node[left]{$A_{\infty} = \frac{1}{1-r_0}$}
    -- +(3pt, 0);
    \draw[dashed] (\lmin, 2) -- (10, 2);
    \draw[fill] (-4,0) circle(2pt) node[below] {$\frac{-1}{r_0^2}$};
    \draw (\lmin,3.84) +(-3pt, 0) node[left]{      $A_{-1} = \frac{-\sqrt{r_0^{-2}-1}}{F_{-1}(r_0)}$} -- +(3pt, 0);
    \draw[dashed, fill] (\lmin, 3.8410) -- (-1, 3.8410) circle(2pt)
    -- (-1, 0);
    \draw (-1, 3pt) -- (-1, -3pt) node[below]{$-1$};
    \draw (\lmin, 2.8854) +(-3pt, 0) node[left]{      $A_0 = \frac{-1}{r_0 \ln(r_0)}$} -- +(3pt, 0);
    \draw[dashed] (\lmin, 2.8854) -- (0, 2.8854) -- (0,0);
    \draw (0, 3pt) -- (0, -3pt) node[below]{$0$};
    \node[left] at (\lmin, 0) {$A_{-r_0^{-2}} = 0$};
  \end{tikzpicture}
  \caption{Graph of the function $\lambda \mapsto A_\lambda$.}
  \label{fig : cond_neumann}
\end{figure}

\begin{proof}[Proof of Theorem \ref{thm : sol_radial}]
  Let us start by computing all radial solutions to \eqref{eq :
    laplace_radial}--\eqref{eq : cb_neumann_radial}.  Because
  $\varphi$ and $\rho$ are radial functions, Equations~\eqref{eq :
    laplace_radial} and \eqref{eq : div_radial} become, respectively,
  \begin{gather}
    \label{eq: eq_laplacien_radial}
    -\frac{\partial^2 \varphi}{\partial r^2}(r)
    - \frac{1}{r} \frac{\partial \varphi}{\partial r}(r)  = \rho(r),
    \\[1\jot]
    \frac{\partial}{\partial  r}
    \Bigl(\rho(r) \frac{\partial \varphi}{\partial r}(r)\Bigr)
    + \frac{1}{r} \rho(r) \frac{\partial \varphi}{\partial r}(r)
    = 0.
  \end{gather}
  The second equation is equivalent to the fact that there exists a
  real constant $K$ such that
  \begin{equation}\label{eq : def_rho}
    \forall r \in \intervaloo{r_0, 1},\qquad
    \rho(r) \frac{\partial \varphi}{\partial r}(r) = \frac{K}{r}.
  \end{equation}
  Multiplying \eqref{eq: eq_laplacien_radial} by
  $\partial\varphi/\partial r$ and using \eqref{eq : def_rho} yields
  \begin{equation}\label{eq : dphi 0}
    -\frac{\partial^2 \varphi}{\partial r^2}(r)
    \frac{\partial \varphi}{\partial r}(r) - \frac{1}{r}
    \Bigl(\frac{\partial \varphi}{\partial r}(r)\Bigr)^2
    = \frac{K}{r}.
  \end{equation}
  This is equivalent to \eqref{eq: eq_laplacien_radial}
  provided that $(\partial \varphi / \partial r)(r) \ne 0$ for almost every
  $r\in \intervaloo{r_0, 1}$.  Equation \eqref{eq : dphi 0} can be
  seen as a linear equation in $(\partial\varphi/\partial r)^2$ and solving it
  implies that
  \begin{equation}\label{eq : dphi carré}
    \forall r\in \intervaloo{r_0, 1},\qquad
    \Bigl(\frac{\partial \varphi}{\partial r}(r)\Bigr)^2
    = \frac{\longtilde{K}}{r^2} -K
  \end{equation}
  for some constant $\longtilde{K} \in \IR$.  It is not possible that
  $K$ and $\longtilde{K}$ both vanish as that would imply that $\varphi$
  is constant and so cannot satisfy the boundary conditions
  \eqref{eq : cb_r_0} and \eqref{eq : cb_r_1}.  Therefore $\partial\varphi/\partial r$ vanishes at at most one point and so \eqref{eq : dphi
    carré} is equivalent to \eqref{eq: eq_laplacien_radial}.
    Depending on the value of $\longtilde{K}$, we can distinguish two
  cases.
  \begin{enumerate}[label=\arabic*)]
  \item \textbf{Case $\longtilde{K}=0$.}

    Equality \eqref{eq : dphi carré} says that $\partial\varphi/\partial r$ is
    constant and the boundary condition \eqref{eq : cb_neumann_radial}
    that $\partial\varphi/\partial r \equiv -A$ and $A^2 = -K$.  Hence, using
    \eqref{eq : cb_r_1},
    \begin{equation*}
      \varphi(r) = \int_r^1  A \intd s =  A  (1-r).
    \end{equation*}
    The boundary condition \eqref{eq : cb_r_0} leads to
    \begin{equation*}
      A = A_\infty \coloneq \frac{1}{1-r_0},
    \end{equation*}
    and so $ \varphi(r)
    =\frac{1-r}{1-r_0}$.   Now using \eqref{eq : def_rho}
    and $K = -A^2$, we get $\rho(r) = \frac{1}{r(1-r_0)}$.

  \item \textbf{Case $\longtilde{K}\ne 0$.} 
    Let us set $\lambda \coloneq -K/\longtilde{K}$.  Equality
    \eqref{eq : dphi carré} thus becomes
    \begin{equation} \label{ineq : cas lambda}
      \forall r\in \intervaloo{r_0, 1},\qquad
      \Bigl(\frac{\partial \varphi}{\partial r}(r)\Bigr)^2
      = \longtilde{K} (r^{-2}+\lambda).
    \end{equation}
    If the right hand side vanishes in $\intervaloo{r_0, 1}$, it
    changes sign and so $\partial\varphi/\partial r$ will not be defined on the
    whole $\intervaloo{r_0, 1}$.  Therefore $\longtilde{K}
    (r^{-2}+\lambda) > 0$ for all $r \in \intervaloo{r_0,1}$ or,
    equivalently, $\lambda \ge -1$ if $\longtilde{K} > 0$ and
    $\lambda \ge -r_0^{-2}$ if $\longtilde{K} < 0$.
    Given the boundary conditions \eqref{eq : cb_r_0} and \eqref{eq :
      cb_r_1}, $\partial\varphi/\partial r$ must be negative at some $r \in
    \intervaloo{r_0, 1}$, so
    \begin{equation}
      \label{eq:dphi}
      \frac{\partial\varphi}{\partial r}(r)
      = - \sqrt{\longtilde{K} (r^{-2}+\lambda)}
    \end{equation}
    and, using \eqref{eq : cb_r_1}, we get
    \begin{equation*}
      \varphi(r)
      = \int_r^1 \sqrt{\longtilde{K}(s^{-2} + \lambda)} \intd s.
    \end{equation*}
    Distinguishing the cases $\longtilde{K} > 0$ and
    $\longtilde{K} < 0$, we find after integration that
    \begin{equation}\label{eq : def_phi_F}
      \varphi(r) = \sqrt{\abs{\longtilde{K}}}
      \bigl(F_\lambda(1)-F_\lambda(r)\bigr)
    \end{equation}
    where $F_\lambda$ is defined in the statement of the theorem.
    Imposing the boundary condition \eqref{eq : cb_r_0} enables to
    determine $\sqrt{\abs{\longtilde{K}}}$ and then to rewrite
    \eqref{eq : def_phi_F} as
    \begin{equation}\label{eq : phi_lambda}
      \varphi_\lambda(r)
      = \frac{F_\lambda(1)-F_\lambda(r)}{F_\lambda(1)-F_\lambda(r_0)}.
    \end{equation}
    It remains to impose condition~\eqref{eq : cb_neumann_radial}.  In
    view of \eqref{eq:dphi}, it is equivalent to
    $A = \sqrt{\longtilde{K} (r_0^{-2}+\lambda)}$ or, equivalently,
    given the value for $\sqrt{\abs{\longtilde{K}}}$ found above,
    \begin{equation}
      \label{eq : eqA}
      A = A_\lambda
      \coloneq \frac{\sqrt{\abs{\lambda+r_0^{-2}}}}{F_\lambda(1)-F_\lambda(r_0)}.
    \end{equation}
    The expression \eqref{eq : sol_radial_rho} for $\rho$ follows from
    \eqref{eq : def_rho}, \eqref{eq:dphi} and
    $\lambda = -K/\longtilde{K}$.
  \end{enumerate}
  To prove uniqueness of the radial solution, it remains to establish
  that the map $\mathcal{A}: \lambda \mapsto A_\lambda$ is one-to-one
  on
  $\lambda \in \intervaloc{-\infty, -r_0^{-2}} \cup \intervalco{-1,
    +\infty} \cup \{\infty\}$.  To that end, it suffices to prove that
  $\mathcal{A}$ is decreasing on $\intervaloc{-\infty, -r_0^{-2}}$ and
  on $\intervalco{-1, +\infty}$ and that $A_\lambda \to A_\infty$ as
  $\lambda \to \pm\infty$.  Note that, since $\mathcal{A}$ is
  obviously continuous on
  $\intervaloc{-\infty, -r_0^{-2}} \cup \intervalco{-1, +\infty}$ in
  view of \eqref{eq:def-F} and \eqref{eq : eqA}, this also implies
  that the image of $\mathcal{A}$ is
  $\intervalco{A_{-r_0^{-2}}, A_\infty} \cup \intervaloc{A_\infty,
    A_{-1}} \cup \{A_\infty\} = \intervalcc{0, A_{-1}}$.
  Consequently, the inverse of $\mathcal{A}$,
  \begin{equation*}
    \mathcal{A}^{-1}: \intervalcc{0, A_{-1}} \to \Dom\mathcal{A}:
    A \mapsto \lambda \text{ such that } A_\lambda = A
  \end{equation*}
  is continuous when $\Dom\mathcal{A}$ is seen as the three pieces
  $\intervalco{-r_0^{-2}, -\infty}$, $\{\infty\}$, and
  $\intervaloc{+\infty, -1}$ glued together with the topology
  coming from the compactification of $\IR$ with a single point at
  infinity.

  Let now show that $\mathcal{A}$ is decreasing by showing that
  $\partial_\lambda A_\lambda < 0$ on $\intervaloo{-\infty, r_0^{-2}}
  \cup \intervaloo{-1,+\infty}$.
  Thanks to \eqref{eq : eqA},
  \begin{equation*}
    \frac{1}{A_\lambda}
    = \frac{F_\lambda(1) - F_\lambda(r_0)}{ \sqrt{|\lambda + r_0^{-2}|}}
    = \frac{r_0}{\sqrt{1+\lambda r_0^2}} \int_{r_0}^1 \frac{1}{s}
    \sqrt{|1+\lambda s^2|} \intd s = \int_{r_0}^1 \frac{r_0}{s}
    \sqrt{\frac{1+\lambda s^2}{1+\lambda r_0^2}}\intd s.
  \end{equation*}
  A direct computation yields,
  \begin{equation*}
    \partial_\lambda \Biggl(\frac{r_0}{s}
    \sqrt{\frac{1+\lambda s^2}{1+ \lambda r_0^2}}\Biggr)
            = \frac{r_0}{2s} \, \sqrt{\frac{1+\lambda r_0^2}{1 +\lambda s^{2}}}
    \frac{s^2 -r_0^2}{(1+\lambda r_0^2)^2}.
  \end{equation*}
  Since $s^2-r_0^2 >0$ for $s \in \intervaloc{r_0, 1}$, one deduces
  \begin{equation*}
    \partial_\lambda\Bigl(\frac{1}{A_\lambda}\Bigr)
    = - \frac{\partial_\lambda A_\lambda}{A_\lambda^2} >0,
  \end{equation*}
  whence the claim.

  Let us now turn to the limits as $\lambda \to \pm\infty$.  Using
  l'H\^opital's rule and \eqref{eq:def-F}, it is straightforward to
  show that $F_\lambda(r)/\sqrt{\lambda} \to r$ as $\lambda \to
  +\infty$ and $F_{\lambda}(r) / \sqrt{-\lambda} \to r$ as $\lambda
  \to -\infty$.  The fact that $A_\lambda \to A_\infty$ as $\lambda
  \to \pm\infty$ then readily follows from~\eqref{eq : eqA}.

  Finally, it remains to prove that the maps
  \begin{align*}
    &\Dom\mathcal{A} \to H^1(\intervaloo{r_0,1}):
      \lambda \mapsto \varphi_\lambda,
    &\Dom\mathcal{A} \setminus \Bigl\{\frac{-1}{r_0^{2}}, -1\Bigr\}
      \to L^2(\intervaloo{r_0,1}):
      \lambda \mapsto \rho_\lambda
  \end{align*}
  (with the above described topology on
  $\Dom\mathcal{A}$) are continuous.  This results from the explicit
  expressions \eqref{eq:dphi} and \eqref{eq : phi_lambda} for
  $\varphi_\lambda$ and \eqref{eq : sol_radial_rho} for
  $\rho_\lambda$.  When $\lambda \to
  \pm\infty$, we divide both the numerator and denominator by
  $\sqrt{\abs{\lambda}}$.
\end{proof}

Since we have fixed $\varphi=1$ on $\gammac$, a solution $(\varphi,\rho)$ is said to be \emph{physical} if $\rho \ge 0$.
In view of \eqref{eq : sol_radial_rho},
$(\varphi_\lambda,\rho_\lambda)$ is physical if and only if
$\lambda \in \intervaloc{-\infty, -r_0^{-2}} \cup
\intervalco{0,+\infty} \cup \{\infty\}$.  Figure~\ref{fig :
  radial_sol} illustrates these results for $r_0=0.25$.  On the
contrary, for $\lambda \in \intervalco{-1, 0}$, the solutions are
nonphysical because $\rho_\lambda < 0$ (see Figure~\ref{fig :
  radial_sol_non_physics}).
Note that, when $\lambda = -1/r_0^2$ (resp.\ $\lambda = -1$),
$\rho_\lambda$ is singular at $r = r_0$ (resp.\ $r = 1$) and does not
belong to $L^2(\intervaloo{r_0, 1})$; see Figures~\ref{fig : rho}
and~\ref{fig : rho_non_physics} for an illustration.

\begin{figure}[h!t]
  \begin{subfigure}[b]{0.49\linewidth}
    \centering
    \begin{tikzpicture}[x = 7cm, y=62mm]
      \draw[->] (0.2,0) -- (1,0) -- +(3ex,0) node[below]{$r$};
      \draw[->] (0.2, 0) -- (0.2, 1) -- +(0, 3ex) node[left]{$\varphi$};
      \draw[thick, color = pink!90!black]
      plot file {\data{phi_lambda_0.dat}};
      \draw[thick, color = purple]
      plot file {\data{phi_lambda_4.dat}};
      \draw[thick, color =  brown!70!black]
      plot file {\data{phi_lambda_100.dat}};
      \draw[thick, color =  green!50!black]
      plot file {\data{phi_lambda_m16.dat}};
      \draw[thick, color = orange]
      plot file {\data{phi_lambda_m25.dat}};
      \draw[thick, color = blue]
      plot file {\data{phi_lambda_m100.dat}};

        \foreach \x in {0.25,0.4,0.6,0.8,1}{
        \draw (\x, 1pt) -- (\x, -1pt) node[below, scale=0.6]{$\x$};
      }
      \foreach \y in {0,0.5,1}{
        \draw (0.2, \y) ++(3pt,0) -- ++(-6pt,0) node[left, scale=0.6]{$\y$};
      }
      \begin{scope}[shift={(55mm, -5mm)}]
        \foreach \c/\dy/\l in {
          pink!90!black/3/0, purple/4/4, brown!70!black/5/100,
          green!50!black/2/-16, orange/1/-25,blue/0/-100
        }{
          \draw[color=\c, ultra thick]
          (0, 0.7) ++(0, \dy\baselineskip) -- ++(1em,0)
          node[black, right] {\footnotesize $\lambda = \l$};          
        }
      \end{scope}
                            \end{tikzpicture}
    \caption{Electrical potential $\varphi$.}
    \label{fig : phi}
  \end{subfigure}   \begin{subfigure}[b]{0.49\linewidth}
    \centering
    \begin{tikzpicture}[x=7cm, y=3ex]
      \draw[->] (0.2, 0) -- (1, 0) -- +(3ex, 0) node[below]{$r$};
      \draw[dotted,yscale=0.8] (0.2, 5.5) -- +(0, 5ex);
      \draw[yscale=0.8] (0.2, 0) -- (0.2, 5.5) -- +(0, 1ex);
      \begin{scope}
        \clip (0.2, 0) rectangle (1, 4.5);
        \foreach \la/\c in {
          4/purple, 100/{brown!70!black},
          m16/green!50!black, m25/orange,
          m100/blue}
        {
          \draw[thick, color=\c,yscale=0.8]
          plot file {\data{rho_lambda_\la.dat}};
        }
        \draw[ultra thick, color =pink!90!black, yscale=0.8] plot file {\data{rho_lambda_0.dat}};
      \end{scope}
      \foreach \x in {0.25,0.4,0.6,0.8,1}{
        \draw (\x, 3pt) -- (\x, -3pt)
        node[below]{$\scriptstyle\x$};
      }
      \foreach \y in {0,...,5}{
        \draw[yscale = 0.8] (0.2, \y) +(3pt, 0) -- +(-3pt, 0)
        node[left]{$\scriptstyle \y$};
      }
      \draw[dotted, color=orange] (0.326, 4.55) -- (0.29, 5.85);
      \draw[dotted, color=green!50!black]
      (0.373, 4.54) -- (0.337, 5.82);
      \begin{scope}[y=1.8mm, yshift=12ex]
        \draw[->,yscale = 0.8] (0.2, 6) -- (0.2, 32) node[left]{$\rho$};
        \foreach \y in {7, 13, 19, 25, 29}{
          \draw[yscale = 0.8] (0.2, \y) +(3pt, 0) -- +(-3pt, 0)
          node[left]{$\scriptstyle\y$};
        }
        \clip (0.2, 5.5) rectangle (1, 30);
        \foreach \la/\c in {m16/green!50!black, m25/orange}{
          \draw[thick, color=\c,yscale = 0.8]
          plot file {\data{rho_lambda_\la.dat}};
          \draw[dotted,green!50!black,yscale=0.8] (0.256,28.91) -- (0.253,32);
        }
      \end{scope}
     \begin{scope}[shift={(55mm, 33mm)}]
        \foreach \c/\dy/\l in {
          pink!90!black/3/0, purple/4/4, brown!70!black/5/100,
          green!50!black/2/-16, orange/1/-25,blue/0/-100
        }{
          \draw[color=\c, ultra thick]
          (0, 0.7) ++(0, \dy\baselineskip) -- ++(1em,0)
          node[black, right] {\footnotesize $\lambda = \l$};          
        }
      \end{scope}
    \end{tikzpicture}

                    \caption{Charge density $\rho$.}
    \label{fig : rho}
  \end{subfigure}
  \caption{Physical radial solutions for $r_0=0.25$ and different
    values of $\lambda$.}
  \label{fig : radial_sol}
\end{figure}

\begin{figure}[h!t]
  \begin{subfigure}[b]{0.49\linewidth}
    \centering
    \begin{tikzpicture}[x = 7cm, y=55mm]
      \draw[->] (0.2,0) -- (1,0) -- +(3ex,0) node[below]{$r$};
      \draw[->] (0.2, 0) -- (0.2, 1) -- +(0, 3ex) node[left]{$\varphi$};

      \draw[thick, color = yellow!90!black]
      plot file {\data{phi_lambda_m1.dat}};
      \draw[thick, color = teal]
      plot file {\data{phi_lambda_m05.dat}};
      \draw[thick, color = violet]
      plot file {\data{phi_lambda_m075.dat}};
      \draw[thick, color = blue!50!white]
      plot file {\data{phi_lambda_m025.dat}};
      \draw[thick, color = gray!80!black]
      plot file {\data{phi_lambda_m095.dat}};

      \foreach \x in {0.25,0.4,0.6,0.8,1}{
        \draw (\x, 1pt) -- (\x, -1pt) node[below, scale=0.6]{$\x$};
      }
      \foreach \y in {0,0.5,1}{
        \draw (0.2, \y) ++(3pt,0) -- ++(-6pt,0) node[left, scale=0.6]{$\y$};
      }

      \begin{scope}[shift={(55mm, -5mm)}]
        \foreach \c/\dy/\l in {
          blue!50!white/4/-0.25, teal/3/-0.5, violet/2/-0.75,
          gray!80!black/1/-0.95, yellow!90!black/0/-1
        }{
          \draw[color=\c, ultra thick]
          (0, 0.7) ++(0, \dy\baselineskip) -- ++(1em,0)
          node[black, right] {\footnotesize $\lambda = \l$};          
        }
      \end{scope}
    \end{tikzpicture}
    \caption{Non-physical solution $\varphi$.}
    \label{fig : phi_non_physics}
  \end{subfigure}   \begin{subfigure}[b]{0.49\linewidth}
    \centering
    \begin{tikzpicture}[x=10cm, y=4ex]
      \draw[->] (0.2, 0) -- (1, 0) -- +(3ex, 0) node[below]{$r$};
      \draw[dotted,yscale=0.8] (0.2, -7) -- +(0, 5ex);
      \draw[yscale=0.8] (0.2, 0) -- (0.2, -5.5) -- +(0, 1ex);
      \begin{scope}
        \clip (0.2, 0) rectangle (1, -4.5);
        \foreach \la/\c in {
          m1/yellow!90!black, m05/teal,
          m075/violet, m025/blue!50!white,
          m095/gray!80!black}
        {
          \draw[thick, color=\c,yscale=0.8]
          plot file {\data{rho_lambda_\la.dat}};
        }
      \end{scope}
      \foreach \x in {0.25,0.4,0.6,0.8,1}{
        \draw (\x, 3pt) -- (\x, -3pt)
        node[above, pos= 0.4]{$\scriptstyle\x$};
      }
      \foreach \y in {0,...,-5}{
        \draw[yscale = 0.8] (0.2, \y) +(3pt, 0) -- +(-3pt, 0)
        node[left]{$\scriptstyle \y$};
      }
      \draw[dotted, color=yellow!90!black] (0.986, -4.5) -- (0.994, -5.65);
      \begin{scope}[y=1.8mm, yshift=-16ex]
        \draw[->,yscale = 0.8] (0.2, -6.5) -- (0.2, -25) node[left]{$\rho$};
        \foreach \y in {-7, -13, -18, -22}{
          \draw[yscale = 0.8] (0.2, \y) +(3pt, 0) -- +(-3pt, 0)
          node[left]{$\scriptstyle\y$};
        }
        \clip (0.2, -6.5) rectangle (1.1, -24);
        \foreach \la/\c in {m1/yellow!90!black,m095/gray!80!black}{
          \draw[thick, color=\c,yscale = 0.8]
          plot file {\data{rho_lambda_\la.dat}};
          \draw[dotted, yellow!90!black,yscale=0.8] (0.999,-22.27)--(1,-25);
        }
      \end{scope}
      \begin{scope}[shift={(60mm, -65mm)}]
        \foreach \c/\dy/\l in {
          blue!50!white/4/-0.25, teal/3/-0.5, violet/2/-0.75,
          gray!80!black/1/-0.95, yellow!90!black/0/-1
        }{
          \draw[color=\c, ultra thick]
          (0, 0.7) ++(0, \dy\baselineskip) -- ++(1em,0)
          node[black, right] {\footnotesize $\lambda = \l$};          
        }
      \end{scope}
    \end{tikzpicture}

                \caption{Non-physical solution $\rho$.}
    \label{fig : rho_non_physics}
  \end{subfigure}
  \caption{Non-physical radial solution for $r_0=0.25$ and different
    values of $\lambda$}
  \label{fig : radial_sol_non_physics}
\end{figure}

Furthermore, Figures \ref{fig : radial_sol} and \ref{fig :
  radial_sol_non_physics} enable us to visualize the continuous
dependence of solutions $(\varphi,\rho)$ with respect to $A$ which was
mentioned in the statement of Theorem~\ref{thm : sol_radial}.

\begin{remark}
  Solutions of the system \eqref{eq : laplace_radial}--\eqref{eq :
    cb_neumann_radial} in the case $\lambda\ge 0$ were already given
  without proof in \cite[Appendix~A]{Janischewskyj-Cela:1979},
  \cite[formula~(8)]{Sarma-Janischewskyj:1969},
  \cite[Appendix~B]{Takuma-Ikeda-Kawamoto:1981},  
  and
  \cite{Townsend:1914}.  Here, we provide an
  exhaustive determination of (physical) radial solutions and
  demonstrate that the set of solutions as $A$ changes forms a
  continuum.
\end{remark}

\section{A Least-Square Algorithm}
\label{snumerique}

In section \ref{s:unipolar}, we proved the existence of solutions
to~\eqref{sys:sys_sans_A} by adding a small diffusion term,
$\varepsilon \Delta\rho$, and letting $\varepsilon$ tends to zero.
While this approach may be turned into an algorithm, there are two issues.
Firstly, there is no good computational counterpart to the Schauder
fixed point theorem.  The second issue concerns the Neumann boundary
condition in~\eqref{sys:BC}. This condition cannot be controlled
using this approach, even though it is important for the engineers to
be able to specify it. In this section, we present a least squares
approach to solving \eqref{sys:gen}--\eqref{sys:BC} that relies solely
on the classical Finite Element Method (FEM).

Let us start by noticing that we can equivalently write the second
equation of~\eqref{sys:gen} as
$-\Div \bigl((\rho+1) \nabla\varphi \bigr) = \rho$.  The advantage of
this form is that, since $\rho$ can tend to $0$ (see \cite[Section 1]{rolando}),
the ellipticity of the linear operator is kept.
Moreover, the constant of ellipticity is bounded away of $0$ independently
of $\rho$.
In summary, we here
want to compute a numerical approximation of the solution
$(\varphi,\rho)$ to the problem
\begin{equation}\label{sys:diff}
  \begin{cases}
    -\Delta \varphi = \rho, & \text{in } \Omega,\\
    -\Div \bigl((\rho+1) \nabla\varphi\bigr) = \rho, & \text{in } \Omega,\\
    \varphi = 1,&  \text{on } \gammac,\\
    \varphi = 0,&  \text{on } \gammad, \\[2\jot]
    \displaystyle \frac{\partial \varphi}{\partial\normal} = A,
                            & \text{on } \gammac,
  \end{cases}
\end{equation}
where $A$ may be a constant or a sufficiently smooth function.

\subsection{Description of the algorithm}

For a fixed function $\rho$, we divide the system \eqref{sys:diff}
into two subsystems with unknowns $\varphi_1$ and $\varphi_2$ (below
they will be denoted by $\varphi_1(\rho)$ and $\varphi_2(\rho)$ to
emphasize their dependency with respect to $\rho$):
\begin{equation}\label{sys:vf1}
  \begin{cases}
    - \Delta \varphi_1 = \rho, &  \text{in }\Omega,  \\[1\jot]
    \dfrac{\partial \varphi_1}{\partial\normal} = A,
                               & \text{on } \gammac,\\
    \varphi_1  = 0, & \text{on } \gammad,
  \end{cases}
\end{equation}
and
\begin{equation}\label{sys:vf2}
  \begin{cases}
    -\Div\bigl((\rho +1) \nabla \varphi_2 \bigr) = \rho ,
    & \text{in } \Omega,\\
    \varphi_2  = 1,& \text{on } \gammac,\\
    \varphi_2  = 0,& \text{on } \gammad.
  \end{cases}
\end{equation}
Note that both problems have a unique solution in $H^1(\Omega)$ as
soon as $A$ belongs to $H^{1/2}(\gammac)$,
$\rho\in L^\infty(\Omega)$, and $\rho\geq 0$, thanks to Lax-Milgram
theorem.  Then, we introduce the functional
\begin{equation}
  \label{def : J}
  J: L^\infty(\Omega) \rightarrow \IR:
  \rho \mapsto \tfrac{1}{2} \bigl\|\nabla
  \bigl(\varphi_1(\rho)-\varphi_2(\rho)\bigr) \bigr\|^2_\Omega,
\end{equation}
and notice that $J(\rho)$ vanishes if and only if
$\varphi_1(\rho) = \varphi_2(\rho)$ which means that
$(\varphi_1(\rho), \rho)$ is a solution to~\eqref{sys:diff}.

The FEM is used to discretize both equations, \eqref{sys:vf1} and
\eqref{sys:vf2}.  To give additional details, let us fix $V_h(\Omega)$
a finite-dimensional subspace of $H^1(\Omega) \cap L^\infty(\Omega)$
and
$V_{h,\gammad}(\Omega) \coloneq \{u_h \in V_h(\Omega) \mid u_h = 0
\text{ on } \gammad\}$, both equipped with the $H^1$-norm.
Let $\rho_h \in V_h(\Omega)$ be such that $\rho_h \ge 0$.  The variational formulation of the
discrete approximation of \eqref{sys:vf1} consists in finding
$\varphi_{1,h} \in V_{h, \gammad}(\Omega)$ such that
\begin{equation}\label{eq : vf_sys1}
  \forall \chi \in V_{h, \gammad}(\Omega), \qquad
  \int_\Omega \nabla \varphi_{1,h} \nabla \chi \intd x
  = \int_\Omega \rho_h \chi \intd x
  + \int_{\gammac} A \chi \intd x .
\end{equation}
We proceed similarly for \eqref{sys:vf2}. Let
$V_{h,\partial\Omega}(\Omega) \coloneq \{u_h \in V_h(\Omega) \mid u_h = 0
\text{ on } \partial \Omega \}$. 
The variational form of
\eqref{sys:vf2} consists in seeking
$\varphi_{2,h} \in V_{h,\gammad}(\Omega)$ such that
$\varphi_{2,h} = 1$ on $\gammac$ and
\begin{equation}\label{eq: sys_vf2}
  \forall \chi \in V_{h,\partial\Omega}(\Omega),\qquad
  \int_\Omega (\rho_h+1)\nabla \varphi_{2,h} \nabla \chi \intd x
  = \int_\Omega \rho_h \chi \intd x.
\end{equation}
The discrete version of functional $J$ defined by~\eqref{def : J} is given by
\begin{equation*}
  J_h: V_h(\Omega) \rightarrow \IR: \rho_h \mapsto
  \tfrac{1}{2} \bigl\|\nabla
  \bigl(\varphi_{1,h}(\rho_h)-\varphi_{2,h}(\rho_h) \bigr) \bigr\|^2_\Omega.
\end{equation*}
In order to minimize $J_h$, we want to calculate its
gradient $\nabla J_h \in V_h(\Omega)$ with respect to the $H^1$-topology, which is characterized as follows. For any
$z \in V_h(\Omega)$,
\begin{align}
  \bigl(\nabla J_h(\rho_h) \bigm| z\bigr)_{H^1}
  &= \frac{\partial J_h}{\partial \rho_h}(\rho_h)[z] \nonumber \\
  & = \bigl(\nabla(\varphi_{1,h}-\varphi_{2,h}) \bigm|
    \nabla\bigl(\partial_{\rho_h}
    (\varphi_{1,h}-\varphi_{2,h})[z]\bigr) \bigr)_\Omega \nonumber\\
  & = \bigl( \nabla(\varphi_{1,h}-\varphi_{2,h}) \bigm|
    \nabla (\varphi'_{1,h,z}-\varphi'_{2,h,z}) \bigr)_\Omega , \label{eq : grad_J}
\end{align}
where $\varphi'_{1,h,z} \coloneq \partial_{\rho_h} \varphi_{1,h}[z]$
and $\varphi'_{2,h,z} \coloneq \partial_{\rho_h} \varphi_{2,h}[z]$.  To
compute $\varphi'_{1,h,z}$ and $\varphi'_{2,h,z}$, we
differentiate Equations~\eqref{eq : vf_sys1} and \eqref{eq:
    sys_vf2} with respect to $\rho_h$ in the direction~$z$.  We
then obtain the following two equations:
\begin{equation}\label{eq : var_phi_prim_1z}
  \forall \chi \in V_{h,\gammad}(\Omega), \qquad
  \int_\Omega \nabla \varphi'_{1,h,z} \nabla \chi \intd x
  = \int_\Omega z \chi \intd x,
\end{equation}
where $\varphi'_{1,h,z} \in V_{h,\gammad}(\Omega)$ and
\begin{equation} \label{eq : var_phi_prim_2z}
  \forall \chi \in V_{h,\partial\Omega}(\Omega), \qquad
  \int_\Omega  (\rho_h +1) \nabla \varphi'_{2,h,z} \nabla \chi \intd x
  = \int_\Omega z \chi - z \nabla \varphi_{2,h} \nabla \chi \intd x,
\end{equation}
where $\varphi'_{2,h,z} \in V_{h,\partial\Omega}(\Omega)$.

We now want to rewrite the right hand side of
Equality~\eqref{eq : grad_J} in order to
not to have to compute it for a basis of $z$.  For the part involving
$\varphi'_{1,h,z}$, we have
\begin{align*}
  \bigl( \nabla(\varphi_{1,h}-\varphi_{2,h}) \bigm|
  \nabla \varphi'_{1,h,z} \bigr)_\Omega
  & = \int_\Omega \nabla(\varphi_{1,h}-\varphi_{2,h})
    \nabla \varphi'_{1,h,z} \intd x \\
  & = \int_\Omega (\varphi_{1,h}-\varphi_{2,h}) z \intd x
    = \bigl(\varphi_{1,h}-\varphi_{2,h} \bigm| z\bigr)_\Omega
    \quad \text{by \eqref{eq : var_phi_prim_1z}} .
\end{align*}
For the second part, we introduce the intermediate function
$\Psi_h \in V_{h,\partial\Omega}(\Omega)$ which is the solution to the
following variational problem:
\begin{equation} \label{eq : psi}
  \forall \chi \in V_{h,\partial\Omega}(\Omega), \qquad
  \int_\Omega (\rho_h +1) \nabla \Psi_h \nabla \chi \intd x
  = \int_\Omega \nabla (\varphi_{1,h}-\varphi_{2,h}) \nabla \chi \intd x .
\end{equation}
Hence, we have:
\begin{align*}
  \bigl( \nabla(\varphi_{1,h}-\varphi_{2,h}) \bigm|
  \nabla \varphi'_{2,h,z} \bigr)_\Omega
  & = \int_\Omega \nabla(\varphi_{1,h}-\varphi_{2,h})
    \nabla \varphi'_{2,h,z} \intd x \\
  & = \int_\Omega (\rho_h +1) \nabla \Psi_h  \nabla \varphi'_{2,h,z}\intd x
  &&\text{by \eqref{eq : psi}}\\
  & = \int_\Omega z \Psi_h - z \nabla \varphi_{2,h} \nabla \Psi_h \intd x
  && \text{ by \eqref{eq : var_phi_prim_2z}} \\
  & = \bigl(\Psi_h
    - \nabla \varphi_{2,h} \nabla \Psi_h \bigm| z \bigr)_\Omega.
\end{align*}
The last identities allow to simplify the equation \eqref{eq : grad_J}
for $\nabla J_h(\rho_h) \in V_h(\Omega)$ into
\begin{equation}\label{eq : grad_J_fin}
  \forall z \in V_h(\Omega),\quad
  \bigl(\nabla J_h(\rho_h) \bigm| z\bigr)_{H^1}
  = \bigl(\varphi_{1,h}-\varphi_{2,h}
  - \Psi_h
  + \nabla \varphi_{2,h} \nabla \Psi_h \bigm| z\bigr)_\Omega.
\end{equation}

\begin{algorithm}
  For the algorithm, we have created four subroutines as follows:
  given $\rho \in V_h(\Omega)$, we can
  \begin{enumerate}
  \item Compute the solution $\varphi_{1,h}(\rho_h)$ to \eqref{eq :
      vf_sys1} using the FEM.
  \item Compute the solution $\varphi_{2,h}(\rho_h)$ to \eqref{eq:
      sys_vf2} with the FEM.
  \item Compute the functional $J_h(\rho_h)$ with
    $\varphi_{1,h}(\rho_h)$ and $\varphi_{2,h}(\rho_h)$.
  \item Compute $\nabla J_h(\rho_h)$ by solving
    Equation~\eqref{eq : grad_J_fin} with the FEM, the quantities
    $\varphi_{1,h}(\rho_h)$ and $\varphi_{2,h}(\rho_h)$ being computed
    using the above subroutines and $\Psi_h$ being the solution
    to~\eqref{eq : psi}, computed again with the FEM.
  \end{enumerate}
  Given a tolerance \texttt{tol} (by default $10^{-12}$),
  a maximum number of iterations
  \texttt{N} (by default $500$)
  and an initial guess $\rho_0 \in V_h(\Omega)$,
  we use a minimization algorithm that stops when $J_h(\rho_h) \le
  \texttt{tol}$ or the maximum number of iterations exceeds \texttt{N}.
\end{algorithm}

\subsection{Some additional numerical details}
The Finite Element Method, used to compute the functions $\varphi_{1,h}$ and $\varphi_{2,h}$, is implemented via the Python library \textit{Netgen} \cite{Netgen}. Here we choose $V_h(\Omega)$ to be the space of P1 elements.  For the minimization step, we use the \textit{Scipy} library, and more specifically its function \textit{scipy.optimize.minimize}, which provides access to several optimization algorithms. Among these, we selected only methods that require the gradient of the objective function but not the explicit computation of the Hessian matrix.

Of those routines, the L-BFGS-B one is significantly the faster but exhibits convergence problems depending on the version of Scipy used. In the latest version, the method fails due to a known unresolved bug \cite{github_issue}. In earlier version, we observed some abnormal terminations when applying L-BFGS-B to the Rosenbrock function. The SLSQP method is the slowest of all those tested (up to 50 times slower).

The remaining two methods, namely Trust Constraint (trust-constr) and Conjugate Gradient (CG), are the fastest and converge to the default tolerance of the functional $J_h$ across a variety of test cases. They are essentially on par in terms of speed and precision. The numerical tests below were performed using trust-constr.

\subsection{Numerical results}

\subsubsection{Radial case}

Since analytical solutions are available for the radial case, we first
test our algorithm in this setting to check whether it
converges to the exact solution.  We take the radius of the
interior boundary, $r_0$, as $0.25$ (see Figure~\ref{fig : radial})
and the Neumann condition is given
by formula~\eqref{lemma : A_radial} with $\lambda=-50$.
Starting with the initial density $\rho_0(x, y) \coloneq x^2+y^2+1$,
we let the algorithm converges for various mesh sizes.
Figure~\ref{fig : cvg_radial} shows, in a double logarithmic scale, the relative error between
$\varphi_{\text{exact}}$, the exact solution $\varphi$ given
by \eqref{eq : sol_radial_phi}, and $\varphi_{\text{num}}$,
the approximate solution $\varphi$ computed numerically, in blue.
The same figure shows,
in red, the relative error between $\rho_{\text{exact}}$,
the exact solution $\rho$ given by \eqref{eq :
  sol_radial_rho}, and $\rho_{\text{num}}$, the approximate
solution $\rho$ computed numerically. Both errors decrease to zero as the
mesh size $h \to 0$.  This provides evidence that,
in that specific case, the numerical solution approaches the exact
one with an error of order of about $h^{1.66}$ for $\varphi$ and $h^{2.33}$ for $\rho$.

\begin{figure}[h]
  \centering
  \begin{tikzpicture}
    \begin{axis}[
                axis x line = bottom,
        axis y line = left,
        tick label style={font=\footnotesize},
        ylabel style={yshift=0.35cm},
        xlabel={\footnotesize max mesh size},
        ylabel={\footnotesize relative errors},
        xmin = 0.01,xmax=1.2,
        ymin = 0.0003,ymax = 1,
        xmode=log,         ymode=log,         log basis x=10,         log basis y=10       ]
            \addplot table {\data{cvg_phi.dat}};
      \addplot table {\data{cvg_rho.dat}};
      
            \draw[red, thick] (axis cs:0.15,0.0005) -- (axis cs:0.3,0.0005) node[black] at (axis cs: 0.225,0.0004) {\tiny $1$};
      \draw[red, thick] (axis cs:0.3,0.0005) -- (axis cs:0.3,0.002514) node[black] at (axis cs:0.37,0.001) {\tiny $2.33$};
      \draw[red, thick] (axis cs:0.15,0.0005) -- (axis cs:0.3,0.002514);

            \draw[blue, thick] (axis cs:0.4,0.0005) -- (axis cs:0.8,0.0005)
      node[black] at (0.6,0.0004) {\tiny $1$};
      \draw[blue, thick] (axis cs:0.8,0.0005) -- (axis cs:0.8,0.001580)
      node[black] at (0.95, 0.001) {\tiny $1.66$};
      \draw[blue, thick] (axis cs:0.4,0.0005) -- (axis cs:0.8,0.001580);
    \end{axis}
        \draw[blue, ultra thick] (7.2,5) -- (7.7,5) node[black] at (9.5,5) {      \scriptsize $\displaystyle
      \frac{\|\nabla (\varphi_{\text{exact}}-\varphi_{\text{num}})\|_\Omega}{
        \|\nabla \varphi_{\text{exact}}\|_\Omega}$};
    \draw[red, ultra thick] (7.2,3.5) -- (7.7,3.5) node[black] at (9.5,3.5) {      \scriptsize $\displaystyle
      \frac{\|\rho_{\text{exact}}-\rho_{\text{num}}\|_\Omega}{\| \rho_{\text{exact}}\|_\Omega}$};

              \end{tikzpicture}
  \caption{Convergence in the radial case to the exact solutions.}
  \label{fig : cvg_radial}
\end{figure}

To shed some light on the size of the basin of convergence, we ran the
algorithm with different initial densities $\rho_0$ on a mesh with a
size $h \approx 3\cdot 10^{-2}$ ($2690$ degrees of freedom).  For the
above choice of $\rho_0$, the initial value of $J_h$ is
$3.6 \cdot 10^{-6}$ and the default tolerance of $10^{-12}$ is reached
after $152$ iterations.  For the nonradial $\rho_0(x,y) = x^2+100$,
the initial value of $J_h$ is approximately $430$ and the algorithm
reaches the default tolerance after $157$ iterations.  We have also
tried several additional initial $\rho_0$, for example one that oscillates
in the polar angle, $\rho_0(x,y) = 1 + y/\sqrt{x^2 + y^2}$, and the
results are sensibly the same in all cases.  This highlights the
robustness of the algorithm with respect to the initial guess.

In section~\ref{s:radial}, all radial solutions were determined.  A
natural question is whether nonradial solutions exist for some
boundary data $A: \gammac \to \IR$.  Unfortunately, the results of
section~\ref{s:unipolar} do not provide an answer to that question.
As an element of evidence towards a positive answer, we chose the
nonradial $A(x,y) = \frac{1}{2} + \frac{1}{4}x/\sqrt{x^2 + y^2}$ (the
constraint \eqref{eq : ineg_deriv_normal}, with
$\partial\phie/\partial\normal$ readily computed from~\eqref{eq :
  phie_radiale}, is satisfied),
the initial guess $\rho_0(x,y) = x^2+y^2+1$ and ran our algorithm.  It
converged with a final value for $J_h$ of approximately
$8.5 \cdot 10^{-13}$, suggesting that a nonradial solution indeed
exists.  The level curves of the final $\varphi$ and $\rho$ are
depicted in Figure~\ref{fig : phi_rho_Acos}. 

Finally, as we can see in the previous section \ref{s:radial}, there does not exist a solution for $A$ greater than $A_{-1}$. So, in that case, the algorithm should not converge. To exemplify this, let us take $A = 5$ and $\rho_0 = x^2+y^2+1$. The algorithm stops because the default number of iterations is exceeded and the final value of $J_h$ is approximately $1.9$, which is not small enough to consider the returned $(\varphi, \rho)$ to be a solution.

\begin{figure}[h!t]
  \begin{minipage}[c]{.49\linewidth}
    \centering
    \includegraphics[scale=1,width=8cm]{\data{A_sin_radial_phi.pdf}}
  \end{minipage}
  \hfill  \begin{minipage}[c]{.49\linewidth}
    \centering
    \includegraphics[scale=1,width=8cm]{\data{A_sin_radial_rho.pdf}}
  \end{minipage}

  \caption{Levels curves for the numerical solutions $\varphi$ (left) and
    $\rho$ (right) with $A(x,y) = \frac{1}{2} + \frac{1}{4}x/\sqrt{y^2+x^2}$.
    \label{fig : phi_rho_Acos}}
\end{figure}

\subsubsection{The unipolar half-disk}
\label{sec:half-disk}

Now, let us consider a more general case for which exact solutions
cannot be computed analytically.  More precisely, we consider the case
of a half-disk containing a circular conductor (see Figure \ref{fig :
  half-disk}).  The radius of the interior circle centered at $(0,0)$
(i.e.\ the interior boundary) has been chosen to be $0.25$.  The
center of the exterior circle is at the point $(0,-1)$ and its radius
is $2$.  Since we only consider the half-disk, the coordinates of its
corners are $(-2,-1)$ and $(2,-1)$.

As we did for the radial case, we have tested different initial
guesses $\rho_0$ and different values for the Neumann condition $A$.
We performed our tests with a mesh of size $h = 5\cdot 10^{-2}$
($2784$ degrees of freedom).  First, we considered the initial guess
$\rho_0(x,y) = \sqrt{x^2+y^2}+1$ and the Neumann condition $A=1$ and
we numerically verified that the constraint \eqref{eq :
  ineg_deriv_normal} is satisfied.
In Figure \ref{fig : courbe_phi_rho}, we can see the level curves of
the solution $(\varphi,\rho)$.  We can note that the value 
for $\rho$ is higher around the interior boundary. This makes sense
physically because $\rho$ represents the space charges in the air that
in practice are higher near the conductor. We also observe that $\rho$
goes to zero near the~corners.
As in the previous section, the algorithm keep converging (to the same
solution) when starting with different initial guesses more distant
from the solution (i.e.\ with higher values of $J_h$).

\begin{figure}[ht]
  \begin{minipage}[c]{.49\linewidth}
    \centering
    \includegraphics[scale=1,width=8cm]{      \data{sqrt_xcarre_ycarre_plus1_phi.pdf}}
  \end{minipage}
  \hfill  \begin{minipage}[c]{.49\linewidth}
    \centering
    \includegraphics[scale=1,width=8cm]{      \data{sqrt_xcarre_ycarre_plus1_rho.pdf}}
  \end{minipage}
  
  \caption{Levels curves for $\varphi$ (left) and $\rho$ (right)
    with $A = 1$.
    \label{fig : courbe_phi_rho}}
\end{figure}

We also again tested the convergence of the algorithm for non-constant
$A$'s.  As an example, for $A(x,y) = x/2 + 0.5$ and
$\rho_0(x,y) = \sqrt{x^2+y^2}+1$, the algorithm reaches the default
tolerance after $56$ iterations.  The final value of $J_h$ is
approximately $9.16 \cdot 10^{-13}$, suggesting that such a
solution exists.  We can see on Figure~\ref{fig : A non constant} the
level curves of the solution $(\varphi, \rho)$.

\begin{figure}[h!]
  \begin{minipage}[c]{.49\linewidth}
    \centering
    \includegraphics[scale=1,width=8cm]{\dataSub{A_xdemi}{phi_niveau.pdf}}
  \end{minipage}
  \hfill  \begin{minipage}[c]{.49\linewidth}
    \centering
    \includegraphics[scale=1,width=8cm]{\dataSub{A_xdemi}{rho_niveau.pdf}}
  \end{minipage}

  \caption{Levels curves for $\varphi$ (left) and $\rho$ (right) with
    $A(x,y) = x/2 +0.5$.
    \label{fig : A non constant}}
\end{figure}

Finally, for a value of $A$ greater than
$\partial\phie/\partial\normal$ (see Theorem~\ref{thm :
  deriv_normal}), the algorithm should not converge.  To exemplify
this, let us take $A = 3$ and $\rho_0 = \sqrt{x^2+y^2}+1$.  The
algorithm stops because the default maximum number of iterations is
exceeded and the final value of $J_h$ is approximately
$4.32\cdot 10^{-3}$, which is not small enough to consider the
returned $(\varphi, \rho)$ to be a solution.

\begin{remark}
  In these experiments, we consider simplified test cases which allow
  for an initial evaluation of the algorithm’s performance. Although
  these cases are not representative of realistic physical scenarios,
  extending the approach to more representative configurations is part
  of our future work.
\end{remark}

\section*{Acknowledgements}

We are grateful to Prof.~Stéphane Clénet (ENSAM Lille, France) for introducing us to the topic of HVDC transport, for his invaluable help on the physical aspects of the problem, as well as for  his enthusiastic support of our findings.
We also would like to thank Zuqi Tang (L2EP Lille, France) for interesting discussions and for his involvement in the supervision of Madeline Chauvier's PhD thesis.
\printbibliography

\end{document}